\documentclass[12pt, reqno]{amsart}
\usepackage{microtype}
\linespread{1.05}
%\usepackage{showlabels}
%For figures:
%\begin{figure}
%\centering
%\includegraphics[width=80mm]{name.pdf}
%\caption{}
%\label{Fig:name}
%\end{figure}

%%%%%%%%%%%%%%%%%%%%%%%%%%%%%%%%%%%%%
\usepackage{float}
\usepackage[dvipdfmx]{graphicx}%need to erase when uploaded to arXiv
\usepackage[usenames]{color}
\usepackage{hyperref}
\usepackage{bbm}
\usepackage{bm}
\usepackage[all]{xy}
\usepackage{mathrsfs}
\usepackage{amsmath,amsfonts,amssymb,amsthm,amscd}
\usepackage[subrefformat=parens]{subcaption}
\captionsetup{compatibility=false}
\usepackage{xcolor}
\usepackage{comment}
\usepackage{tikz}
\usetikzlibrary{arrows.meta, shapes, trees, calc}

\makeatletter
\numberwithin{equation}{section}

\setlength{\textheight}{23cm}
\setlength{\textwidth}{16cm}
\setlength{\oddsidemargin}{0cm}
\setlength{\evensidemargin}{0cm}
\setlength{\topmargin}{0cm}

% Short-hand commands for greek alphabet          

%\renewcommand{\l}{\lambda}
%\renewcommand{\L}{\Lambda}
\newcommand{\m}{\mu}

\renewcommand{\SS}{\Sigma}

\newcommand{\e}{\varepsilon}
\newcommand{\f}{\varphi}

%% Calligraphic

\newcommand{\N}{{\mathbb N}}
\newcommand{\R}{{\mathbb R}}

\renewcommand{\Pr}{{\mathbb P}}

%\newcommand{\inner}[2]{\left\langle #1, #2\right\rangle}%inner product

%angle bracket
%curly bracket
%round bracket
%square bracket
%|*|
%|*|
%\newcommand{\prnths}[1]{\left( #1 \right)}

\DeclareMathOperator{\id}{{\rm id}}

\renewcommand{\(}{\left(}
\renewcommand{\)}{\right)}

\newcommand{\wbar}{\overline}

%\newcommand{\sl}{\mathfrak{sl}}

%trace

\newcommand{\Pb}{{\bf P}}
\newcommand{\Eb}{{\bf E}\,}

\newcommand{\TV}{{\rm TV}}
\newcommand{\Var}{{\rm Var}}
\newcommand{\Cov}{{\rm Cov}}

%\newcommand{\bi-form}{\langle\, \cdot\,, \,\cdot\,\rangle}

%transpose

\newcommand{\diag}{{\rm diag}}

%%bold type
\newcommand{\mb}[1]{\bm{#1}}

\newcommand{\cU}{\mathcal{U}}

%%%%%%%%%%%%%%%%%%%%%%%%%%%%%%%%%%%%%%%%%%%%%%%%%%%%%%%
%%                                           %%
%% Theorem-like environments                 %%
%%                                           %%
%%%%%%%%%%%%%%%%%%%%%%%%%%%%%%%%%%%%%%%%%%%%%%%%%%%%%%%

\newtheorem{theorem}{Theorem}[section]
\newtheorem{proposition}[theorem]{Proposition}
\newtheorem{lemma}[theorem]{Lemma}
\newtheorem{corollary}[theorem]{Corollary}

\theoremstyle{definition}

\theoremstyle{remark}
\newtheorem{remark}[theorem]{Remark}

%% end of theorem-like environments

%%%%%%%%%%%%%%%%%%%%%%%%%%%%%%%%%%%%%

\begin{document}

\title[Noise stability on hyperbolic groups]
      {Noise stability on hyperbolic groups}
\author{Timoth\'ee B\'enard}
\address{LAGA - Universit\'e Sorbonne Paris Nord, Villetaneuse, FRANCE
}
\email{benard@math.univ-paris13.fr}

\author{Ryokichi Tanaka}
\address{Department of Mathematics, 
Kyoto University, Kyoto, JAPAN}
\email{rtanaka@math.kyoto-u.ac.jp}
\thanks{R.T.\ is partially supported by 
JSPS Grant-in-Aid for Scientific Research JP24K06711}
\date{\today}

\maketitle

\begin{abstract}
We show that symmetric random walks on non-elementary hyperbolic groups with non-zero homomorphisms into the reals are noise stable at linear scale under finite exponential moment condition.
\end{abstract}

\section{Introduction}

Let $\Gamma$ be a countable group, let $\mu$ be a probability measure on it. 
A $\mu$-{\bf random walk}  on $\Gamma$ starting at the identity $\id$ is a sequence of random variables $(w_{n})_{n\geq 0}$ of the form 
 $w_n:=\gamma_1\cdots \gamma_n$ (with $w_0:=\id$) where the $\gamma_{i}$'s are i.i.d.\ variables of law $\mu$.
The {\bf noise sensitivity and stability problem} for a $\mu$-random walk asks the following.
For each fixed real $\rho \in (0, 1)$,
let $w_n^\rho:=\gamma_1^\rho\cdots \gamma_n^\rho$
where $\gamma_i^\rho$ is resampled according to $\mu$ with probability $\rho$,
or retained as $\gamma_i$ with probability $1-\rho$,
independently in every step.
Does the pair $(w_n, w_n^\rho)$ become independent as $n$ tends to infinity?

More precisely,
for each real $\rho \in [0, 1]$,
consider  on $\Gamma \times \Gamma$ the probability measure
\[
\pi^\rho:=\rho \mu \otimes \mu+(1-\rho)\mu_\diag,
\]
where $\mu\otimes \mu$ is the product measure and $\mu_\diag((\gamma, \gamma'))=\mu(\gamma)$ if $\gamma=\gamma'$ and $0$ else.
We write $\nu_n:=\nu^{\ast n}$ for the $n$-fold convolution of a probability measure $\nu$ on a group. 
Note that the $n$-step distribution of a  $\pi^\rho$-random walk starting at $\id$ is given by $\pi^\rho_n$,
which coincides with the law of $(w_n, w^\rho_n)$ on $\Gamma \times \Gamma$.
Moreover, $w_n$ and $w^\rho_n$ have the same distribution $\m_n$ on $\Gamma$.  
The $\mu$-random walk on $\Gamma$ is called {\bf noise sensitive} in total variation if for every $\rho \in (0, 1]$, 
\[
\|\pi^\rho_n-\mu_n\otimes \mu_n\|_\TV \to 0 \quad \text{as $n \to \infty$}.
\]
In the above the total variation distance is defined  for probability measures $\nu^{(i)}$, $i=1, 2$ by
\[
\|\nu^{(1)}-\nu^{(2)}\|_\TV:=\sup_{A \subset \Gamma \times \Gamma}|\nu^{(1)}(A)-\nu^{(2)}(A)|,
\]
see also Remark \ref{Rem:TV}.
In contrast, the $\mu$-random walk on $\Gamma$ is called {\bf noise stable} in total variation if  for every $\rho \in [0, 1)$,
\[
\|\pi^\rho_n-\mu_n\otimes \mu_n\|_\TV \to 1 \quad \text{as $n \to \infty$}.
\]

Informally, on the one hand, if a $\mu$-random walk is noise sensitive,
then even though 
only a small number of increments are resampled (say, $\rho=0.01$),
the pair $(w_n, w^\rho_n)$ becomes independent as $n$ tends to infinity.
On the other hand,
if a $\mu$-random walk is noise stable,
then even though 
a large number of increments are resampled (say, $\rho=0.99$),
the pair $(w_n, w^\rho_n)$ is still distinguishable from an independent pair.
Note that $w_n$ and $w^\rho_n$ are independent when $\rho=1$.

The study of noise sensitivity for random walks on groups was proposed by Benjamini and Brieussel in \cite{BenjaminiBrieussel}. 
They observe that if $\Gamma$ is a finite group, then every $\mu$-random walk is noise sensitive \cite[Proposition 5.1]{BenjaminiBrieussel}. They also prove noise sensitivity for some (generating) $\mu$-random walk on the infinite dihedral group \cite[Theorem 1.4]{BenjaminiBrieussel}. Later, this result has been  generalized in \cite{Tnss}.
It is still a challenge to understand which finitely generated infinite groups admit a noise sensitive random walk. See also related questions in  \cite{BenjaminiBrieussel} and \cite[Section 3.3.4]{KalaiICM2018}.

The noise stability is a strong negation of noise sensitivity.
It has been observed that standard random walks on integer lattices,
i.e.\ non trivial free abelian groups of finite rank are neither noise sensitive nor noise stable \cite[Appendix A]{Tnss}.
An inspection shows that the simple random walk on a free semigroup of rank at least two is noise stable.
In a more general setting,
the following has been shown for every word hyperbolic group:
If a probability measure $\mu$ is non-elementary  (cf.\ Section \ref{Sec:limit_laws}) with finite first moment,
then there exists  $\rho_0 \in (0, 1]$ such that 
$\|\pi^\rho_n-\mu_n\otimes \mu_n\|_\TV \to 1$ as $n \to \infty$ for all $\rho \in [0, \rho_0)$ 
\cite[Theorem 1.3]{NonNS}.
It is not known as to whether $\rho_0=1$ in this generality.
We show that this is the case for a certain class of word hyperbolic groups and probability measures.

\begin{theorem}\label{Thm:intro}
Let $\Gamma$ be a non-elementary word hyperbolic group which admits a non-zero homomorphism to $\R$. Let $\mu$ be a symmetric probability measure on $\Gamma$ with finite exponential moment and whose support is not included in a proper subgroup of $\Gamma$.
It holds that for every $\rho \in [0, 1)$,
\[
\|\pi^\rho_n-\mu_n\otimes\mu_n\|_\TV \to 1 \quad \text{as $n \to \infty$},
\]
i.e.\ the $\mu$-random walk on $\Gamma$ is noise stable in total variation.
\end{theorem}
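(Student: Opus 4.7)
The plan is to construct a sequence of events $A_n\subset\Gamma\times\Gamma$ satisfying $\pi^\rho_n(A_n)\to 1$ and $(\mu_n\otimes\mu_n)(A_n)\to 0$; this will yield $\|\pi^\rho_n-\mu_n\otimes\mu_n\|_\TV\to 1$. The homomorphism $\phi$ serves as an abelian ``ruler'' along which the correlation between $w_n$ and $w_n^\rho$ is quantitatively visible, and the hyperbolic geometry of $\Gamma$ is then used to upgrade this visibility into genuine total-variation separation.

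First I would set up the abelian side. Since $\mu$ is symmetric and $\supp\mu$ generates $\Gamma$, the increment $X_i:=\phi(\gamma_i)$ is mean-zero with positive variance $\sigma^2$; finite exponential moment gives Cram\'er-type control. The joint process $(\phi(w_n),\phi(w_n^\rho))$ is a random walk in (the image of $\phi$ viewed in) $\R^2$ with per-step covariance
\[
\Sigma_\rho=\sigma^2\begin{pmatrix}1&1-\rho\\1-\rho&1\end{pmatrix},
\]
non-singular (determinant $\sigma^4\rho(2-\rho)<\sigma^4$) for $\rho\in(0,1)$. A two-dimensional local limit theorem on the image lattice (Edgeworth under finite exponential moment) concentrates the coupled pair in the anti-diagonal slab of width $\sqrt{\rho n}$, while the independent pair fills the full $\sqrt n\times\sqrt n$ square. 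At this stage the $\phi$-projection alone only yields a $\rho$-dependent constant total-variation gap, not $\to 1$; the hyperbolic structure must do the rest.

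Next I would invoke the Gou\"ezel-type LLT for symmetric random walks on hyperbolic groups: under the present hypotheses, $\mu_n(g)$ admits a sharp density expansion with a Gaussian factor in $\phi(g)/\sqrt n$, a spectral-radius exponential, a polynomial correction tied to $\rank\Gamma^{ab}$, and a boundary-harmonic factor in the direction of $g$. The crucial step is to establish an analogous joint LLT for $\pi^\rho_n(g,g')$. Although $\Gamma\times\Gamma$ is not hyperbolic, the step $\pi^\rho=(1-\rho)\mu_\Delta+\rho(\mu\otimes\mu)$ concentrates on the diagonal copy $\Delta\Gamma\cong\Gamma$, so conditioning on the (binomially concentrated) number $K\sim\operatorname{Bin}(n,\rho)$ of resampled positions and applying the single-walk Gou\"ezel machinery blockwise yields a joint density carrying the anisotropic $\Sigma_\rho$-Gaussian factor together with a \emph{joint} boundary-harmonic factor that reflects the non-trivial correlation of the two boundary directions.

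Finally, $A_n$ is defined by combining two constraints: (i) a $\pi^\rho_n$-typical abelian band and (ii) a coupled-boundary alignment event singling out pairs whose hyperbolic directions are consistent with the coupling. The joint LLT gives $\pi^\rho_n(A_n)\to 1$; under $\mu_n\otimes\mu_n$ the boundary factors decouple and the combined probability vanishes, so $(\mu_n\otimes\mu_n)(A_n)\to 0$. The main obstacle is the joint LLT on the non-hyperbolic product $\Gamma\times\Gamma$: adapting Gou\"ezel's thermodynamic formalism to the coupled walk via conditioning on the resampling pattern, controlling the block-wise asymptotics, and quantifying the singularity between the joint boundary hitting measure and the product harmonic measure are the technical heart of the argument. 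The non-zero homomorphism hypothesis is essential throughout because $\phi$ is precisely the scalar anchor along which the coupling becomes measurable at linear scale; without it only a partial noise-stability regime up to some $\rho_0<1$ is available, as in \cite{NonNS}.
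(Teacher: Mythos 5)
Your proposal identifies the right lever -- the homomorphism $\phi$ and the $\rho$-dependent covariance matrix $\Sigma_\rho$ -- and you correctly observe a genuine subtlety: the fixed-$n$ distribution of $(\phi(w_n),\phi(w_n^\rho))$ only separates from the independent pair by a constant amount in total variation, not by $1-o(1)$. But the route you propose to upgrade this, a joint local limit theorem for $\pi^\rho_n(g,g')$ on $\Gamma\times\Gamma$, is precisely the technical wall the paper avoids, and you do not actually get over it. You name the obstacle yourself: $\Gamma\times\Gamma$ is not hyperbolic, and Gou\"ezel's machinery is built for a single hyperbolic group; conditioning on the binomial resampling count $K$ and applying the single-group LLT ``blockwise'' does not obviously recombine into a usable joint density, because the two coordinates share the unresampled increments and the boundary-harmonic factors for $w_n$ and $w_n^\rho$ are \emph{not} conditionally independent given $K$. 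The claim that ``under $\mu_n\otimes\mu_n$ the boundary factors decouple and the combined probability vanishes'' is stated without a mechanism. As written, the heart of your argument is a sketch of a future theorem, not a proof.

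The paper takes a substantially lighter route that never needs a product LLT. Instead of a distributional statement at a fixed $n$, it uses an almost-sure, path-level invariant: the law of the iterated logarithm for the winding $n\mapsto(\phi\circ r_{\xi^{(1)}}(n),\,\phi\circ r_{\xi^{(2)}}(n))$ along the pair of boundary geodesic rays. After showing (Proposition \ref{Lem:marginal}) that $\phi(r_{\xi_w}(\lambda_\mu n))$ tracks $\phi(w_n)$ up to $o(\sqrt{n\log\log n})$, the classical two-dimensional LIL for $(\phi(w_n^{(1)}),\phi(w_n^{(2)}))$ transfers to the boundary rays, and the LIL accumulation set is exactly $A_{\nu_{\pi^\rho}}^{1/2}\wbar B(0,1)$ with $A_{\nu_{\pi^\rho}}=\lambda_\mu^{-1}\Sigma_\rho$. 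This set is determined by, and determines, the measure class of $\nu_{\pi^\rho}$. Since $\Sigma_\rho\neq\Sigma_{\rho'}$ for $\rho\neq\rho'$, the harmonic measures $\nu_{\pi^\rho}$ on $(\partial\Gamma)^2$ are mutually singular. Finally, Lemma \ref{Lem:weak} (lower semicontinuity of $\|\cdot\|_\TV$ under weak convergence) converts singularity of the limiting boundary measures into $\|\pi^\rho_n-\mu_n\otimes\mu_n\|_\TV\to 1$. This is exactly how the paper resolves the ``constant gap'' issue you flagged: it replaces a fixed-$n$ distributional comparison (LLT) by an a.s.\ asymptotic signature of the entire ray (LIL), which is a $0$--$1$ invariant. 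You should note that the correct level of rigor is achievable without any spectral/thermodynamic machinery, and without ever facing the non-hyperbolicity of $\Gamma\times\Gamma$ -- the product structure only ever enters through the elementary two-dimensional LIL for sums of i.i.d.\ $\R^2$-valued vectors.
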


Recall that a probability measure $\mu$ is called symmetric if $\mu(\gamma^{-1})=\mu(\gamma)$ for all $\gamma \in \Gamma$.
Moreover, as a hyperbolic group is finitely generated,  the condition that  $\Gamma$ admits a non-zero homomorphism to $\R$ is equivalent to the requirement that its  abelianization $\Gamma/[\Gamma, \Gamma]$ is infinite.  Examples include finitely generated non-abelian free groups  and fundamental groups of closed Riemann surfaces different from the sphere and the torus.

Let us point out that Theorem \ref{Thm:intro} is already new for the simple random walk on a free group -- an explicit form of the distribution $\pi^\rho_n$ does not seem available even in this special case.
We show Theorem \ref{Thm:intro} in a stronger form Theorem \ref{Thm:harmonic}, stating that the mass distributions $\pi^\rho_n$ and $\mu_n\otimes\mu_n$ not only do not intersect much, but in fact separate at linear scale. For this, we fix a word metric on $\Gamma$, and set  $\lambda_{\mu}>0$ the associated escape rate of the $\mu$-walk on $\Gamma$, see Section \ref{Sec:limit_laws}.
We equip $\Gamma^2$  with the $l_{\infty}$-metric with respect to $\Gamma$-coordinates, i.e. $d(g,h)=\max\{d(g_{1},h_{1}), d(g_{2},h_{2})\}$ for $g,h\in \Gamma^2$.
We consider  a relaxed notion of distance between two probability measures $(\nu^{(i)})_{i=1,2}$ on $\Gamma^2$, which allows for perturbations up to scale $s>0$. 
Namely, we set
$$\cU^s(\nu^{(1)}, \nu^{(2)}) :=\inf_{(Z_{1},Z'_{1}, Z_{2}, Z'_{2})} \Pb(Z'_{1}\neq  Z'_{2})$$
where  $Z_{i}$, $Z'_{i}$ are random variables such that $Z_{i}$ has law $\nu^{(i)}$ and $d(Z_{i}, Z'_{i})\leq s$ almost surely. 
%Namely, we set $$\cUs(\nu^{(1)}, \nu^{(2)}) :=\inf_{Z_{1}, Z_{2}} \Pb( d(Z_{1},  Z_{2}) \geq s)$$ where the infimum is taken over  couples of random variables $(Z_{1}, Z_{2})$ such that $Z_{i}$ has law $\nu^{(i)}$. 
Note that for $s=0$, this recovers the total variation distance. We show the following.

\begin{theorem}\label{Thm:harmonic}
Let $\Gamma$ be a non-elementary word hyperbolic group endowed with a word metric, let $\mu$ be a non-elementary probability measure with finite exponential moment and such that  $\varphi_{\star}\mu$ is centered for some homomorphism $\varphi : \Gamma\rightarrow \R$ which is not identically zero on the support of $\mu$. 

Then for every $\rho, \rho' \in [0, 1]$ with $\rho \neq \rho'$, every $\alpha\in (0, \lambda_{\mu})$, we have  
\[
\cU^{\alpha n}(\pi_{n}^\rho, \pi_{n}^{\rho'}) \to 1 \quad \text{as $n \to \infty$}.
\]
\end{theorem}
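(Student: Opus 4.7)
The plan is to identify a boundary observable on $\Gamma \times \Gamma$ that is preserved by $\alpha n$-perturbations in the $l_\infty$-metric and whose limit law distinguishes the values of $\rho$ in a mutually singular way. The first step is the dual description $\cU^s(\nu_1, \nu_2) = \inf \|A_1 - A_2\|_{\TV}$, where the infimum is over probability measures $A_i$ admitting a coupling with $\nu_i$ at $d$-distance $\leq s$ almost surely; this reduces the theorem to showing that any $\alpha n$-perturbations of $\pi_n^\rho$ and $\pi_n^{\rho'}$ remain TV-separated in the limit.

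The key geometric input is: in a $\delta$-hyperbolic group, if $|g| \approx \lambda_\mu n$ and $d(g, g') \leq \alpha n$ with $\alpha < \lambda_\mu$, then the Gromov product satisfies $\langle g, g' \rangle_o \geq (\lambda_\mu - \alpha) n - O(1)$. Hence the boundary direction $\xi(g) \in \partial \Gamma$ is preserved by the perturbation up to visual distance $e^{-\Omega(n)} \to 0$. Applied coordinatewise, any $\alpha n$-perturbation of $(w_n, w_n^\rho)$ preserves the joint boundary pair $(\xi(w_n), \xi(w_n^\rho))$, which under $\pi^\rho$ converges almost surely to a random limit $(\xi_1, \xi_2) \in \partial\Gamma \times \partial\Gamma$ with law $\nu^\rho$.

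The crux is to show $\nu^\rho \perp \nu^{\rho'}$ for distinct $\rho, \rho' \in [0, 1]$. The case $\rho = 0$ versus $\rho' > 0$ is immediate: $\nu^0$ is supported on the diagonal, while by non-elementariness $\nu^{\rho'}$ avoids it. For distinct $\rho, \rho' \in (0, 1]$, I would use the homomorphism $\varphi$ and the centeredness assumption: with $\sigma^2 := \Var_\mu \varphi > 0$, each summand in $\frac{1}{n}\sum_{i=1}^n(\varphi(\gamma_i) - \varphi(\gamma_i^\rho))^2$ has $\pi^\rho$-mean $2\rho \sigma^2$ (contributing $0$ on coupled steps and $2\sigma^2$ in expectation on independent steps), so the LLN yields a.s.\ convergence to $2\rho\sigma^2$. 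This tail-measurable invariant distinguishes $\rho$ from $\rho'$. Lifting it to a Borel function on $\partial\Gamma^2$ via identifying the Poisson boundary of $\pi^\rho$ with $(\partial\Gamma \times \partial\Gamma, \nu^\rho)$, through a Kaimanovich-type entropy criterion adapted to the noise-coupled setting, gives mutually disjoint full-measure sets for $\nu^\rho$ and $\nu^{\rho'}$ on $\partial\Gamma^2$, hence $\nu^\rho \perp \nu^{\rho'}$.

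Combining boundary preservation under perturbation with mutual singularity of the $\nu^\rho$'s then concludes $\cU^{\alpha n}(\pi_n^\rho, \pi_n^{\rho'}) \to 1$. The main obstacle is the Poisson boundary identification for the coupled walk $\pi^\rho$ on $\Gamma \times \Gamma$: since $\Gamma^2$ is not itself hyperbolic, Kaimanovich's theorem does not apply directly, and one must carefully combine coordinatewise boundary identification with analysis of the noise-coupling mechanism to show that the joint tail $\sigma$-algebra is generated, modulo $\pi^\rho$-null sets, by the coordinatewise boundary limits.
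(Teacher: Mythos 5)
Your setup is right as far as the geometric preservation of boundary directions under $\alpha n$-perturbations (this is the paper's Lemma \ref{conv-perturb}) and the reduction to proving $\nu^\rho\perp\nu^{\rho'}$. But the crux step -- mutual singularity of the harmonic measures for $\rho,\rho'\in(0,1]$ -- has a genuine gap. Your proposed invariant
$\lim_n \tfrac{1}{n}\sum_{i=1}^n(\varphi(\gamma_i)-\varphi(\gamma_i^\rho))^2 = 2\rho\sigma^2$
is a \emph{tail}-measurable functional of the \emph{trajectory} $(\gamma_i,\gamma_i^\rho)_{i\geq1}$, not a function of the boundary pair $(\xi_1,\xi_2)\in\partial\Gamma^2$. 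Even granting a Poisson boundary identification for the coupled walk (which, as you note, is itself nontrivial, and which really needs to be a \emph{tail}, not just invariant, $\sigma$-algebra statement), the representation of a tail set $T^\rho$ as $\mathrm{bnd}^{-1}(B^\rho)$ only determines $B^\rho$ up to $\nu^\rho$-null sets. There is no canonical choice of $B^\rho$, and crucially the representatives coming from the $\pi^\rho$-walk and the $\pi^{\rho'}$-walk are built from two \emph{different} measure-zero modifications on $\partial\Gamma^2$. Disjointness of $T^\rho$ and $T^{\rho'}$ in trajectory space therefore does not transfer to disjointness of $B^\rho$ and $B^{\rho'}$ up to $(\nu^\rho+\nu^{\rho'})$-null sets, which is exactly what singularity requires. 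In short, you cannot ``read off'' your trajectory invariant from the boundary point.

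The paper avoids this entirely by constructing a boundary observable that is an \emph{explicit Borel function} defined on all of $\partial\Gamma\times\partial\Gamma$: the accumulation set of
$(2n\log\log n)^{-1/2}\bigl(\varphi\circ r_{\xi_1}(n),\,\varphi\circ r_{\xi_2}(n)\bigr)$.
Proposition \ref{Lem:marginal} (an $(n\log\log n)^{1/2}$-scale approximation of $\varphi$ along the geodesic ray by $\varphi$ of the walk at the right stopping time) shows this accumulation set coincides $\nu_{\pi^\rho}$-a.s. with the classical LIL accumulation set for the i.i.d. sums $(\varphi(w_n^{(1)}),\varphi(w_n^{(2)}))$, namely the ellipse $A_{\nu_{\pi^\rho}}^{1/2}\overline{B}(0,1)$ with $A_{\nu_{\pi^\rho}} = \lambda_\mu^{-1}\Cov((\varphi\times\varphi)_\star\pi^\rho)$. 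The off-diagonal entry of this covariance matrix is $(1-\rho)\Var(\varphi_\star\mu)$, which distinguishes $\rho$ and hence gives genuinely disjoint Borel sets carrying $\nu_{\pi^\rho}$ and $\nu_{\pi^{\rho'}}$. This is the piece your proposal is missing: not a more refined entropy or Poisson boundary argument, but a concrete pointwise-defined invariant on $\partial\Gamma^2$ whose value can be computed from the driving measure. You would need to replace your increment-based LLN by something like this joint LIL to make the singularity step go through.
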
 

Note that the separation rate is essentially optimal. Indeed for  $\beta>\lambda_{\mu}$, we have $\cU^{\beta n}(\pi_{n}^\rho, \pi_{n}^{\rho'}) \to 0 $ due to the fact that $\pi_{n}^\rho$ and $\pi_{n}^{\rho'}$ are asymptotically supported on the ball of radius  $\beta n$. For $\beta=\lambda_{\mu}$, it is reasonable to expect the mixed behavior $\cU^{\lambda_{\mu} n}(\pi_{n}^\rho, \pi_{n}^{\rho'}) \to 1/2$, at least provided the central limit theorem for the distance between $\id$ and the $\mu$-random walk has non-degenerate limiting variance, see \cite{BenoistQuintCLThyperbolic} for details on that condition.

%Indeed $\cU^{\lambda_{\mu}n}(\pi_{n}^\rho, \pi_{n}^{\rho'}) \leq 1/2$ due to the fact that half the mass of $\pi_{n}^\rho$, $\pi_{n}^{\rho'}$ is supported on the ball of radius  $\lambda_{\mu}n$. 
Theorem \ref{Thm:harmonic} is related to \cite{BenjaminiBrieussel} which exploits different methods to show positive linear separation for some small proportion of the measures $\pi^\rho_n$ and $\mu_n\otimes\mu_n$ in the case where $\Gamma$ is a free group \cite[Theorem 4.2]{BenjaminiBrieussel}.
A generalization of Theorem \ref{Thm:harmonic}, considering groups $\Gamma$ that may not be hyperbolic but act by isometries on a hyperbolic space is also given later in our paper, see Theorem \ref{Thm:harmonic+}.

\subsection{On the proof}

In the proof of Theorems \ref{Thm:intro} and \ref{Thm:harmonic}, 
we study the corresponding harmonic measure $\nu_{\pi^\rho}$ associated to the $\pi^\rho$-random walk on $\Gamma \times \Gamma$. The   probability measure $\nu_{\pi^\rho}$ is defined on the product of Gromov boundaries $(\partial \Gamma)^2$  as the distribution of the almost sure limiting point of $(w_n, w_n^\rho)$ in $(\Gamma \cup \partial \Gamma)^2$ (cf.\ Section \ref{Sec:limit_laws}).
We show that two different values for $\rho \in [0, 1]$ yield two mutually singular harmonic measures $\nu_{\pi^\rho}$. 
This is achieved by examining the homological winding  of a typical pair of geodesic rays selected by $\nu_{\pi^\rho}$, and highlighting distinct behaviors depending on $\rho$. Our approach is inspired by \cite{BenardWinding} where several limit theorems have been obtained for the winding statistics on a hyperbolic group.  Dealing with a \emph{product} of hyperbolic groups requires to develop those tools further. We now give more details.

%This is achieved by a novel approach to establish singularity of limiting distributions developed in \cite{BenardWinding}. \marginpar{not so novel}
%There, several limit theorems have been obtained for random variables defined through harmonic measures on a Gromov boundary.
%Those theorems concern the {\bf homological winding}, i.e.\the statistics defined by random geodesic rays under taking the abelianization.

Let $\f:\Gamma \to \R$ be a homomorphism on a hyperbolic group $\Gamma$. Given $\xi\in \partial \Gamma$, denote by $r_\xi: \N \to \Gamma$  a unit speed geodesic ray
 originating from $\id$ and converging to $\xi$ in $\partial \Gamma$.
Consider a  driving measure $\mu$ on $\Gamma$ which is non-elementary and has finite exponential moment. Let $\nu$ be the associated  harmonic measure on $\partial \Gamma$. Viewing
 $\xi$ as a random variable on $\partial \Gamma$ distributed according to $\nu$, it is established in  \cite[Theorem 1.4]{BenardWinding} that
 the random variables $(\f\circ r_\xi(n))_{n\geq 1}$ satisfy a strong law of large numbers (LLN), 
a central limit theorem (CLT) and 
a law of the iterated logarithm (LIL) as $n$ goes to infinity.
The LLN and the LIL imply that the mean and the variance appearing in those limit theorems depend only on the measure class of $\nu$.
Moreover, it is shown in \cite{BenardWinding}  that those quantities have explicit formulas in terms of the driving measure $\mu$.
Notably,
the formulas provide numerical invariants of all possible $\mu$ such that $\mu\ast \nu=\nu$
(see the more precise discussion in Section \ref{Sec:limit_laws}).

We apply the method to a product group $\Gamma \times \Gamma$ and show a joint version of the LIL in the present setting.
More precisely, provided the pushforward measure $\varphi_{\star}\mu$ is centered and not a Dirac mass, we determine explicitely the set of accumulation points of the sequence
\[
(2n \log \log n)^{-1/2}(\f \circ r_{\xi^{(1)}}(n), \f \circ r_{\xi^{(2)}}(n))
\]
where $(\xi^{(1)}, \xi^{(2)})$ is a pair of points selected by $\nu_{\pi^\rho}$ on $(\partial \Gamma)^2$.  It appears that those accumulation sets are distinct for $\pi^\rho$-random walks corresponding to different $\rho \in [0, 1]$,
thus justifying the mutual singularity of harmonic measures $\nu_{\pi^\rho}$ on $(\partial \Gamma)^2$.
This implies  noise stability with linear separation as in Theorem \ref{Thm:harmonic}  since any  perturbation of $\pi^\rho_n$ at scale $\alpha n$ ($\alpha\in (0, \lambda_{\mu})$) converges weakly to $\nu_{\pi^\rho}$ on $(\Gamma \cup \partial \Gamma)^2$ as $n$ tends to infinity. Theorem \ref{Thm:intro} follows at once from  Theorem \ref{Thm:harmonic}.

The singularity of the harmonic measures raises the following question on their dimensions.
Let $\mu$ be a non-elementary probability measure with finite first moment
on a word hyperbolic group $\Gamma$.
It has been shown that the harmonic measure $\nu_{\pi^\rho}$ is exact dimensional with respect to a quasi-metric in $(\partial \Gamma)^2$ (e.g., the maximum of quasi-metrics in factors) \cite[Theorem 3.1]{NonNS}.
Furthermore,
the Hausdorff dimension is computed as
\[
\dim \nu_{\pi^\rho}=\frac{h(\pi^\rho)}{\lambda_\mu}
\]
where $h(\pi^\rho)$ is the asymptotic entropy of the $\pi^\rho$-random walk. 
 It would be interesting to know as to whether it holds that $\dim \nu_{\pi^\rho}<\dim \nu_{\mu\otimes \mu}$ for all $\rho \in [0, 1)$.

\section{Singularity of harmonic measures and winding statistics}

\subsection{Preliminaries and limit laws on single hyperbolic groups}\label{Sec:limit_laws}

In this section, we fix notations for the rest of the paper and recall some results from \cite{BenardWinding}. For a complementary background on  hyperbolic geometry, see the original paper \cite{GromovHyperbolic} and the survey \cite{Calegari}.

\bigskip

 We let $\Gamma$ be a  hyperbolic (finitely generated) group, endowed with some left-invariant word metric. Given any $\gamma\in \Gamma$, we denote by $|\gamma|$ the distance from $\gamma$ to the identity for the word metric. We let $\partial \Gamma$ denote the Gromov boundary of $\Gamma$ and for every $\xi\in \partial \Gamma$, we choose some geodesic ray $r_{\xi}:\N\rightarrow \Gamma$ such that $r_{\xi}(0)=\id$ and with limiting point $\xi$.  Here geodesic ray means that $|r_{\xi}(k)^{-1}r_{\xi}(l)|=l-k$ for every $k,l\in \N$ with $k\leq l$. On occasion, given $t\in \R^+$, we will write $r_{\xi}(t):=r_{\xi}(\lfloor t  \rfloor)$.

 We let $\mu$ denote a probability measure on $\Gamma$. We set $\Omega =\Gamma^\N$, endowed with the probability measure $\Pr_{\mu}$, which is the pushforward of $\mu^{\otimes \N^{*}}$ by $(\gamma_{i})_{i\geq 1}\mapsto (\gamma_{1}\dots\gamma_{n})_{n\geq 0}$.  A $\Pr_{\mu}$-typical sequence $(w_n)_{n \geq 0}$ represents  a (right) $\mu$-trajectory on $\Gamma$. We assume that $\mu$ has {\bf finite exponential moment}, i.e.\ $\sum_{\gamma\in \Gamma}e^{c| \gamma|}\mu(\gamma)<\infty$ for some $c>0$.
We also suppose that $\mu$ is {\bf non-elementary}. This means 
that  the semigroup generated by the support of $\mu$ contains at least two loxodromic elements with disjoint pairs of fixed points in the Gromov boundary $\partial \Gamma$. In particular $\Gamma$ is non-elementary, i.e.\ it is neither  finite  nor virtually cyclic. Conversely, assuming $\Gamma$ non-elementary, 
any distribution whose support generates $\Gamma$ as a group is non-elementary. As $\mu$ is non-elementary, 
$\Pr_{\mu}$-almost every trajectory $w=(w_{n})\in \Omega$ converges to a point $\xi_{w}$ in $\partial \Gamma$. Moreover, the moment condition on $\mu$ guarantees that the rate of escape to the boundary is linear for the word metric on $\Gamma$. Namely, $\Pr_{\mu}$-almost surely and in $L^1$, we have
\begin{equation}\label{Eq:Kingman}
\frac{1}{n}|w_n|\to \lambda_\mu \quad \text{as $n \to \infty$},
\end{equation}
where $\lambda_\mu >0$ is some deterministic constant (cf.\ \cite[Section 7.1]{BMSS}).

The distribution $\nu_\mu$ of the limiting point $\xi_{w}$ as $w$ varies with law $\Pr_{\mu}$ is called the {\bf harmonic measure} (or the {\bf Furstenberg measure}) associated to $\mu$.
It is the unique probability measure on $\partial \Gamma$ satisfying that $\mu\ast \nu_\mu=\nu_\mu$,
where $\mu\ast \nu_\mu=\sum_{\gamma \in \Gamma}\mu(\gamma)\nu_\mu\circ \gamma^{-1}$. 
The harmonic measure $\nu_\mu$ encodes the asymptotic properties of the $\mu$-random walk trajectories.
It is, however, not clear how much information on $\mu$ can be extracted from (the measure class of) $\nu_\mu$.
In \cite{BenardWinding}, it is shown that $\nu_{\mu}$ conveys information regarding the projection of $\mu$ to the abelianization. Among other results,
one reads the following.

\begin{theorem}[Theorem 1.4 in \cite{BenardWinding}]\label{Thm:Benard}
Keep the above notations and let $\varphi : \Gamma \rightarrow \R$ a  homomorphism. 
\begin{itemize}
\item{\upshape (LLN)} 
For $\nu$-almost every $\xi \in \partial X$, we have as $n \to \infty$, 
\[
\frac{u \circ r_\xi(n)}{n} \to \lambda_{\mu}^{-1}\Eb(\varphi_{\star}\mu).
\]
\item{\upshape (CLT)}
Assume $\Eb(\varphi_{\star}\mu)=0$ and $\xi$ is a random variable distributed according to $\nu_\mu$. Then, as $n \to +\infty$, the sequence of random variables
\[
\frac{u\circ r_\xi(n)}{\sqrt{n}}
\]
converges in law to the centered Gaussian distribution on $\R$ with variance given by $\lambda_{\mu}^{-1}\Var(\varphi_{\star}\mu):=\kappa^2_{\nu_\mu}\in \R^+$ (where $\kappa_{\nu_\mu}\geq 0$).

\item{\upshape (LIL)}
Assume $\Eb(\varphi_{\star}\mu)=0$. For $\nu_\mu$-almost every $\xi \in \partial X$,
the set of accumulation points of the sequence
\[
\frac{u\circ r_\xi(n)}{\sqrt{2n\log \log n}} \quad \text{for $n\geq 3$}
\]
is equal to $[-\kappa_{\nu_\mu}, \kappa_{\nu_\mu}]$.
\end{itemize}
\end{theorem}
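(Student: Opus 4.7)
The plan is to deduce all three asymptotic statements from the corresponding classical limit theorems for sums of i.i.d.\ real random variables, via the fact that the $\mu$-walk closely tracks a boundary geodesic ray. Since $\varphi$ is a group homomorphism,
\[
\varphi(w_n)=\sum_{i=1}^n \varphi(\gamma_i)
\]
is a sum of i.i.d.\ real variables of common law $\varphi_\star\mu$. As $\varphi$ is $L$-Lipschitz for the word metric (with $L=\max_{s\in S}|\varphi(s)|$ on a finite generating set $S$), the exponential moment of $\mu$ passes to $\varphi_\star\mu$, yielding moments of all orders. The strong LLN, Lindeberg CLT, and Hartman-Wintner LIL then give the asymptotics of $\varphi(w_n)$ in their standard form with mean $\Eb(\varphi_\star\mu)$ and variance $\Var(\varphi_\star\mu)$, but with the clock set by the step count $n$ rather than by the word length.

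The bridge between the walk and the geodesic ray is a tracking estimate: under our hypotheses,
\[
d\bigl(w_n,\,r_{\xi_w}(|w_n|)\bigr)=O(\log n)\quad \Pr_\mu\text{-a.s.,}
\]
equipped with an exponential tail bound on the deviation. Such an estimate is by now standard for non-elementary random walks with exponential moment on hyperbolic groups; it combines thin-triangle geometry with a large-deviation bound for the Busemann cocycle and Borel-Cantelli. Combined with the Lipschitz property of $\varphi$, it yields $|\varphi(w_n)-\varphi(r_{\xi_w}(|w_n|))|=O(\log n)$ a.s., an error negligible at each of the scales $n$, $\sqrt n$, and $\sqrt{2n\log\log n}$. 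Hence the LLN, CLT and LIL transfer from $\varphi(w_n)$ directly to $\varphi(r_{\xi_w}(|w_n|))$.

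It remains to reparametrize the geodesic by its own arc-length. Using $|w_n|/n\to\lambda_\mu$ from \eqref{Eq:Kingman} and Slutsky's theorem, the limit laws for $\varphi(r_{\xi_w}(|w_n|))/\sqrt n$ become limit laws for $\varphi(r_{\xi_w}(m))/\sqrt m$ along the random subsequence $m=|w_n|$, with the substitution $n=m/\lambda_\mu$ producing the factor $\lambda_\mu^{-1}$ in front of $\Eb(\varphi_\star\mu)$ and $\Var(\varphi_\star\mu)$. Upgrading to deterministic $m$ relies on the $\mu$-stationarity $\mu\ast\nu_\mu=\nu_\mu$, which provides enough regularity in $m$ of the law of $\varphi\circ r_\xi(m)$ under $\nu_\mu$ to suppress the dependence on the random index. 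Since $\nu_\mu$ is the pushforward of $\Pr_\mu$ under $w\mapsto\xi_w$, each $\Pr_\mu$-a.s.\ conclusion becomes a $\nu_\mu$-a.s.\ one in $\xi$. The main obstacle is the logarithmic tracking estimate: an $o(n)$ bound alone would give only the LLN and collapses at the CLT and LIL scales, so the proof must invest the exponential moment hypothesis to beat every polynomial rate.
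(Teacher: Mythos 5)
The statement you are proving is imported from \cite{BenardWinding}; the present paper does not reprove it in full, but it does reconstruct the crucial bridging step while establishing Proposition~\ref{Lem:marginal}, and comparing your plan with that argument exposes the gap. Your overall strategy---classical limit theorems for the i.i.d.\ sums $\varphi(w_n)$, plus a logarithmic tracking estimate $d(w_n, r_{\xi_w}(|w_n|))=O(\log n)$, plus a reparametrization of the geodesic---is indeed the right one, and the first two ingredients are handled correctly.

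The gap is in the reparametrization. After tracking you control $\varphi\bigl(r_{\xi_w}(|w_n|)\bigr)$, where the geodesic parameter $m=|w_n|$ is \emph{random}. The theorem is about $\varphi\circ r_\xi$ evaluated at \emph{deterministic} times. To pass from one to the other you must compare $\varphi(r_\xi(m))$ with $\varphi(w_n)$ for a deterministic $n\approx m/\lambda_\mu$. But if you take $n=\lfloor m/\lambda_\mu\rfloor$, the LIL for the distance (Proposition~\ref{Prop:LILdistance}) only gives $\bigl||w_n|-m\bigr|=O(\sqrt{m\log\log m})$, hence $d(r_\xi(m), w_n)=O(\sqrt{m\log\log m})$, an error of the same order as the LIL normalization, not negligible against it. Your invocation of Slutsky's theorem only addresses convergence in distribution and is irrelevant to the almost-sure accumulation-set statement; and the appeal to the stationarity $\mu\ast\nu_\mu=\nu_\mu$ does not supply the needed regularity in $m$---stationarity is a constraint on the boundary measure, not on the fluctuations of $\varphi\circ r_\xi$ along a fixed ray. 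As written, your argument only yields the inclusion $[-\kappa_{\nu_\mu},\kappa_{\nu_\mu}]\subseteq\{\text{accumulation points}\}$ (accumulation points of a subsequence are accumulation points of the full sequence), but gives no control on the converse inclusion, and for the CLT the error even blows up relative to $\sqrt m$.

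What is missing is a maximal / large-deviation estimate in the spirit of Lemma~\ref{Lem:LDP}: for i.i.d.\ centered increments with finite exponential moment, $\max\{|S_i-S_n| : |i-n|\le M\sqrt{n\log\log n}\}=o(\sqrt{n\log\log n})$ almost surely. Coupled with the stopping time $\tau_n$ of exit from the ball of radius $\lambda_\mu n$ (Lemmas~\ref{Lem:stopping_time-0} and~\ref{Lem:stopping_time}), which pins $|w_{\tau_n}|$ to $\lambda_\mu n$ within $O(\log n)$ while $|\tau_n-n|=O(\sqrt{n\log\log n})$, one gets $|\varphi(w_{\tau_n})-\varphi(w_n)|=o(\sqrt{n\log\log n})$ a.s., and this is precisely the mechanism behind Proposition~\ref{Lem:marginal}. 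Without this fluctuation control over short random time windows, the reparametrization step fails at both the CLT and LIL scales; the exponential-moment hypothesis must be spent not only on the tracking estimate, as you suggest, but also on this maximal bound.
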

In the above statement, $\Eb$ and $\Var$ refer to the mean and variance of a probability measure on $\R$. 

A large deviation principle and a gambler's ruin estimate are also obtained \cite[Theorem 1.4]{BenardWinding}. Note that the CLT and the LIL assume that the pushforward measure $\varphi_{\star}\mu$ is centered. Such limit laws are still available in the non-centered case. However, the involved variance does not have a simple formulation in terms of $\varphi_{\star}\mu$. It may even happen that the limiting Gaussian distribution for geodesic rays is non-degenerate while that of $\varphi_{\star}\mu$ is degenerate,  see \cite[Lemma 4.2]{BenardWinding}.

Concerning the displacements $|w_n|$, we have the corresponding LIL.

\begin{proposition}\label{Prop:LILdistance} Keep the above notations. 
There exists a constant $\sigma_\mu\ge 0$ 
such that for $\Pr_{\mu}$-almost every trajectory $(w_n)_{n\geq 0}$, 
the set of accumulation points of the sequence
\[
\frac{|w_n|-\lambda_\mu n}{\sqrt{2n\log \log n}} \quad \text{for $n\geq 3$},
\]
is equal to $[-\sigma_\mu, \sigma_\mu]$ in $\R$.
\end{proposition}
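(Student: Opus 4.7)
The plan is to approximate the displacement $|w_n|$ by a Birkhoff-type cocycle sum along a stationary dynamical system encoded by the boundary limit, and then apply a martingale law of the iterated logarithm to this approximation. The blueprint is analogous to the argument underlying the LIL part of Theorem~\ref{Thm:Benard}, but now for the intrinsic word length rather than its composition with a homomorphism.

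First I would introduce the horofunction (Busemann) cocycle $\beta_\xi : \Gamma \to \R$ defined for $\xi \in \partial \Gamma$ by $\beta_\xi(g) := \limsup_{m \to \infty}(|g^{-1} r_\xi(m)| - m)$. Up to a bounded error depending only on the hyperbolicity constant $\delta$, it satisfies the cocycle identity $\beta_\xi(gh) = \beta_\xi(g) + \beta_{g^{-1}\xi}(h)$, together with the thin-triangle estimate $\bigl|\beta_\xi(g) + |g| - 2(g \mid \xi)_\id\bigr| \le C\delta$ where $(g \mid \xi)_\id$ denotes the Gromov product. Since $\Pr_\mu$-almost surely the walk converges to its boundary limit $\xi_w$, and since the finite exponential moment hypothesis gives exponential tail control on $|w_n| - (w_n \mid \xi_w)_\id$ via Mathieu--Sisto-type deviation inequalities, one obtains a decomposition
\[
|w_n| = -\beta_{\xi_w}(w_n^{-1}) + \eta_n,
\]
where the remainder $\eta_n$ has uniformly bounded exponential moments in $n$.

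Unwinding $w_n = \gamma_1\cdots\gamma_n$ via the cocycle identity expresses $-\beta_{\xi_w}(w_n^{-1})$ as a sum $\sum_{k=1}^n F(w_{k-1}, \gamma_k, \xi_w)$ of stationary increments on the enlarged path space $\Gamma^{\N} \times \partial \Gamma$, equipped with the joint distribution of the trajectory together with its boundary limit. The non-elementarity of $\mu$ implies ergodicity of the shift on this enlarged space, and the LLN \eqref{Eq:Kingman} pins the mean of each increment at $\lambda_\mu$. Conditioning against the natural filtration then yields a martingale approximation $M_n$ of $|w_n| - \lambda_\mu n$ whose differences are uniformly sub-Gaussian (inheriting exponential moments from $\mu$ through the $1$-Lipschitz property of $\beta_\xi$). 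Applying the classical LIL for martingale sequences with bounded exponential moments (Heyde--Scott/Stout), the accumulation set of $M_n/\sqrt{2n\log\log n}$ is almost surely $[-\sigma_\mu, \sigma_\mu]$ where $\sigma_\mu^2 := \lim_n n^{-1}\Var(|w_n|)$. Because both $\eta_n$ and the martingale-approximation error are $o(\sqrt{n\log\log n})$ almost surely, the same interval describes the accumulation set of $(|w_n| - \lambda_\mu n)/\sqrt{2n\log\log n}$.

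The main obstacle I anticipate is the quantitative shadow estimate underpinning the identity $|w_n| = -\beta_{\xi_w}(w_n^{-1}) + O(1)$ with exponentially small tails: this is the statement that the trajectory tracks a geodesic toward its own boundary limit with exponentially rare excursions. Under finite exponential moments, this follows from the Mathieu--Sisto deviation inequalities combined with the shadow lemma in $\Gamma$, the same technical input that powers the CLT of \cite{BenoistQuintCLThyperbolic} and the limit laws in \cite{BenardWinding}. Once the shadow estimate is established, the remainder of the argument is a standard martingale decomposition followed by invocation of the classical LIL, and the constant $\sigma_\mu$ is then by construction the asymptotic standard deviation of $|w_n|$.
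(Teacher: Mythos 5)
Your strategy---rewrite $|w_n|$ as a Busemann cocycle sum, control the discrepancy by deviation inequalities, then invoke a martingale LIL over the ergodic boundary extension---is precisely the machinery behind the paper's proof, which consists of a one-line citation of the LIL for cocycles in \cite[Proposition A.5]{BenardWinding} (itself built on this exact framework). So the approach is the same, and the paragraph is a useful unpacking of the citation.

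Two technical points in the reconstruction need repair, though. First, the thin-triangle relation has the wrong sign: directly from $(g\mid\xi)_\id=\lim_m\frac12\big(|g|+m-|g^{-1}r_\xi(m)|\big)$ one gets $\beta_\xi(g)=|g|-2(g\mid\xi)_\id+O(\delta)$, not $-|g|+2(g\mid\xi)_\id+O(\delta)$. Consequently the decomposition you want is $|w_n|=-\beta_{\xi_w}(w_n)+\eta_n$ (equivalently $\beta_{\xi_w}(w_n^{-1})+\eta_n$), with $\eta_n=2\big(|w_n|-(w_n\mid\xi_w)_\id\big)+O(\delta)$, which is exactly the quantity your deviation inequality controls. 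As written, $-\beta_{\xi_w}(w_n^{-1})$ leaves a remainder $\eta_n\approx 2|w_n|$ that grows linearly, because $w_n^{-1}$ converges to a \emph{different} boundary point and $(w_n^{-1}\mid\xi_w)_\id$ stays bounded. Second, and more substantially, the cocycle identity $\beta_\xi(gh)=\beta_\xi(g)+\beta_{g^{-1}\xi}(h)$ on the Gromov boundary holds only up to a per-step $O(\delta)$ error, so unwinding $w_n=\gamma_1\cdots\gamma_n$ naively accumulates an $O(n\delta)$ term that swamps the $\sqrt{n\log\log n}$ scale of the LIL. The standard remedy---and the one implicit in the sources behind the citation---is to lift to the horofunction compactification of $\Gamma$, where the Busemann cocycle identity is \emph{exact}; one proves almost-sure convergence of the walk there and then runs the ergodic/martingale argument on that compactification. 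Once these two fixes are made, the rest of your outline (ergodicity on $\Gamma^{\N}\times\partial\Gamma$, martingale approximation, Heyde--Scott/Stout LIL, identification $\sigma_\mu^2=\lim_n n^{-1}\Var|w_n|$) agrees with the standard proof.
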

\proof 
This is a direct application of the LIL for cocycles obtained in \cite[Proposition A.5]{BenardWinding}, which applies thanks to  \cite[Section 3.1]{BenardWinding} and Remark (2) at the beginning of  \cite[Appendix A]{BenardWinding}.  
See also \cite{BjorklundCLT, BenoistQuintCLThyperbolic, ChoiCLT}.
%This is a consequence of  \cite[Theorem 4.7]{BenoistQuintCLThyperbolic}, see also \cite{ChoiCLT} and \cite[Theorem 1]{BjorklundCLT}. 
\qed

\subsection{Stopping times and technical lemmas}

We keep the notations $(\Gamma, |.|, r_{\xi}, \mu, \nu_{\mu})$ from the Section \ref{Sec:limit_laws}. We consider $\varphi : \Gamma\rightarrow \R$  such that $\Eb \varphi_{\star}\mu=0$.

%keep the notations $(X,o,  \mu, \Omega, \Pr_{\mu})$ introduced in  Section \ref{Sec:limit_laws}. 

%Abusively, we say that a statement holds for almost-every $\mu$-random walk $w_{n}$ if it holds for the sequence $w_{n}$ almost surely. 

The goal of this section is to establish Proposition \ref{Lem:marginal}, 
which tells us that the image under $\varphi$ of a $\nu_{\mu}$-typical geodesic ray on $\Gamma$  behaves like a random walk with centered i.i.d.\ increments at scale $(n \log \log n)^{1/2}$.

\begin{proposition}\label{Lem:marginal}
For $\Pr_\mu$-almost every trajectory $w\in \Omega$, denoting by $\xi_{w} \in \partial \Gamma$ its limiting point, we have as $n \to \infty$,
\[
\frac{1}{\sqrt{n\log \log n}} |\varphi\(r_{\xi_{w}}(\lambda_\mu n)\)-\varphi\(w_n\)| \to 0.
\]
\end{proposition}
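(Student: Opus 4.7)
My plan is to interpolate between $w_n$ and $r_{\xi_w}(\lambda_\mu n)$ via a trajectory time at which the walk has just reached the target displacement $\lambda_\mu n$. Set
\[
K_n := \inf\{k \geq 0 : |w_k| \geq \lambda_\mu n\},
\]
which is a.s.\ finite for $n$ large thanks to the law of large numbers $|w_k|/k \to \lambda_\mu$. I would then decompose
\begin{align*}
\varphi\bigl(r_{\xi_w}(\lambda_\mu n)\bigr) - \varphi(w_n)
&= \bigl[\varphi\bigl(r_{\xi_w}(\lambda_\mu n)\bigr) - \varphi\bigl(r_{\xi_w}(|w_{K_n}|)\bigr)\bigr] \\
&\quad + \bigl[\varphi\bigl(r_{\xi_w}(|w_{K_n}|)\bigr) - \varphi(w_{K_n})\bigr] \\
&\quad + \bigl[\varphi(w_{K_n}) - \varphi(w_n)\bigr] \\
&=: A_n + B_n + C_n,
\end{align*}
and show that each of the three pieces is $o(\sqrt{n\log\log n})$ almost surely.

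The pieces $A_n$ and $B_n$ are purely geometric and I would show they are both $O(\log n)$. For $A_n$: $\varphi$ is Lipschitz for the word metric, with some constant $C_\varphi$, and $r_{\xi_w}$ is a unit-speed geodesic, so $|A_n| \leq C_\varphi\, \bigl||w_{K_n}| - \lambda_\mu n\bigr|$; by the definition of $K_n$, $|w_{K_n}| - \lambda_\mu n \in [0, |\gamma_{K_n}|]$, and the finite exponential moment of $\mu$ combined with Borel--Cantelli yields $\max_{k \leq n}|\gamma_k| = O(\log n)$ a.s. For $B_n$: I would invoke the logarithmic tracking estimate available under finite exponential moments, namely $d(w_k, r_{\xi_w}(|w_k|)) = O(\log k)$ a.s.\ as $k \to \infty$ (a standard consequence of the shadow-lemma framework for random walks on hyperbolic groups); applied at $k = K_n = O(n)$ and combined with the Lipschitz property of $\varphi$, this gives $|B_n| = O(\log n)$.

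The delicate piece is $C_n$. Proposition~\ref{Prop:LILdistance} combined with the $O(\log n)$ bound on single-step sizes yields $|K_n - n| \leq M_n := C_1\sqrt{n\log\log n}$ a.s.\ for $n$ large (by comparing $|w_n|$ to $\lambda_\mu n$ at both endpoints of the relevant interval), and hence $|C_n| \leq \max_{|j|\leq M_n}\bigl|\varphi(w_{n+j}) - \varphi(w_n)\bigr|$. On each side of $j = 0$, the map $j \mapsto \varphi(w_{n+j}) - \varphi(w_n)$ is a one-dimensional random walk with centered i.i.d.\ increments $\varphi(\gamma_i)$, which inherit a finite exponential moment from $\mu$ via $|\varphi(\gamma_i)| \leq C_\varphi|\gamma_i|$. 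A Bernstein-type deviation bound combined with L\'evy's maximal inequality then gives, for any fixed $\delta \in (0, 1/2)$,
\[
\Pr_\mu\Bigl(\max_{|j|\leq M_n}\bigl|\varphi(w_{n+j}) - \varphi(w_n)\bigr| \geq M_n^{1/2+\delta}\Bigr) \leq 4\exp\bigl(-c\,M_n^{2\delta}\bigr),
\]
which is summable along an exponential subsequence of indices; a monotonicity-based interpolation between consecutive dyadic scales then produces the almost sure bound $|C_n| \leq M_n^{1/2+\delta} = o(\sqrt{n\log\log n})$.

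The main technical obstacle is the logarithmic tracking estimate used to bound $B_n$; once this is in hand (it is the standard output of the exponential-moment theory for hyperbolic random walks), the other steps are routine consequences of the displacement LIL of Proposition~\ref{Prop:LILdistance}, single-step exponential tail estimates, and classical maximal inequalities for centered one-dimensional random walks.
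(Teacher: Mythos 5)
Your decomposition is essentially the paper's: your $K_n$ is exactly the paper's stopping time $\tau_n$, your pieces $A_n$ and $B_n$ correspond to the paper's single $O(\log n)$ bound $|w_{\tau_n}^{-1} r_{\xi}(\lambda_\mu n)|\le (2D+C)\log n$ obtained from the overshoot estimate plus the logarithmic tracking input from \cite{ChoiCLT}, and your piece $C_n$ is the paper's $|\varphi(w_{\tau_n})-\varphi(w_n)|$ term. Where you diverge is in how the dominant term $C_n$ is controlled. The paper proves a dedicated lemma (Lemma~\ref{Lem:LDP}) by splitting the range $|i-n|\le k_n$ into a far zone $(\log n)^2\le |i-n|\le k_n$, handled by a scalar large-deviation estimate plus a union bound, and a near zone $|i-n|\le (\log n)^2$, handled by the single-increment exponential tail; this gives $\max|S_i-S_n|=o(k_n)$ directly via Borel--Cantelli. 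You instead apply a Bernstein bound at the full scale $M_n$ together with a maximal inequality and a dyadic interpolation. Both routes are sound and deliver the same $o(\sqrt{n\log\log n})$ conclusion; your exponent $M_n^{1/2+\delta}$ is even slightly sharper, and because $\exp(-cM_n^{2\delta})$ is already summable in $n$ (not merely along a dyadic subsequence), the interpolation step you invoke is actually unnecessary. One small point of care: Lévy's maximal inequality in its classical form requires symmetric summands, which you do not assume; you should replace it with a version valid for centered variables (Etemadi/Ottaviani, or Doob's inequality applied to the exponential submartingale $\exp(\lambda S_k)$), but this is a routine substitution and does not affect the correctness of the argument.
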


For the proof, we first study the sequence of stopping times $\tau_{n}: \Omega\rightarrow \N\cup \{+\infty\}$, where $\tau_{n}$ is the exit time of the centered ball of radius $\lambda_{\mu}n$ in $\Gamma$. More precisely, we define
\[
\tau_n(w) :=\inf\{k \ge 1 \ : \ |w_{k}| \ge \lambda_\mu n\}.
\]
%defined by the first time when a $\mu$-random walk exits from the ball of radius $\lambda_\mu n$ centered at $o$.
%Lemma \ref{Lem:stopping_time} shows that the stopping times are approximated by $n$ within errors of the order $(n\log\log n)^{1/2}$.
%This part uses the LIL for displacements $d(o, w_n.o)$ established in Proposition \ref{Prop:LILdistance} in Section \ref{Sec:limit_laws}.
%Lemma \ref{Lem:LDP} controls the partial increment of a sum of i.i.d.\ in $\R^m$ along a short time interval of the order $(n\log\log n)^{1/2}$ by using the classical large deviation estimate in $\R^m$.  Relying on these two Lemmas \ref{Lem:stopping_time} and \ref{Lem:LDP}, we will give the proof of Lemma \ref{Lem:marginal}.
The next lemma gives a first basic estimate on $\tau_{n}$ and on the overshoot.
\begin{lemma}\label{Lem:stopping_time-0}
We have $\Pr_{\mu}$-almost surely $\lim_{n\to +\infty }\tau_n /n= 1$. Moreover, there exists $C>0$ such that for $\Pr_{\mu}$-almost every $w\in \Omega$, for large enough $n$, 
$$| |w_{\tau_n(w)}|-\lambda_\mu n| \leq C\log(n).$$
\end{lemma}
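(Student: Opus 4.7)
The plan is to derive both assertions from the law of large numbers~\eqref{Eq:Kingman} together with the finite exponential moment of $\mu$; no serious obstacle arises.

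For $\tau_n/n\to 1$, I first note that $|w_n|\to\infty$ a.s.\ by~\eqref{Eq:Kingman}, so $\tau_n<\infty$ and $\tau_n\to\infty$ almost surely. By the very definition of $\tau_n$,
\[
|w_{\tau_n-1}|<\lambda_\mu n\le |w_{\tau_n}|.
\]
Applying~\eqref{Eq:Kingman} along the random sequences $\tau_n$ and $\tau_n-1$ (both tending to infinity on a full-measure event) yields $|w_{\tau_n}|=\lambda_\mu \tau_n(1+o(1))$ and $|w_{\tau_n-1}|=\lambda_\mu(\tau_n-1)(1+o(1))$ almost surely; feeding these into the display and dividing by $\lambda_\mu n$ gives $\tau_n/n\to 1$ almost surely. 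In particular $\tau_n\le 2n$ for every sufficiently large $n$, almost surely.

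For the overshoot bound, the triangle inequality combined with $|w_{\tau_n-1}|<\lambda_\mu n$ gives
\[
0\le |w_{\tau_n}|-\lambda_\mu n \le |w_{\tau_n-1}|+|\gamma_{\tau_n}|-\lambda_\mu n < |\gamma_{\tau_n}|,
\]
so it suffices to bound $|\gamma_{\tau_n}|$. Since $\tau_n\le 2n$ eventually a.s., I would instead bound the deterministic maximum $\max_{1\le k\le 2n}|\gamma_k|$. Markov's inequality applied to the nonnegative variable $e^{c|\gamma_1|}$ gives $\Pr_\mu(|\gamma_1|\ge t)\le Me^{-ct}$ with $M:=\sum_\gamma e^{c|\gamma|}\mu(\gamma)<\infty$, so a union bound yields
\[
\Pr_\mu\Bigl(\max_{1\le k\le 2n}|\gamma_k|\ge C\log n\Bigr)\le 2Mn^{1-cC}.
\]
Choosing $C$ with $cC>2$ makes this tail summable in $n$, so the Borel--Cantelli lemma produces $\max_{1\le k\le 2n}|\gamma_k|\le C\log n$ eventually almost surely. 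Combining with the previous display gives the announced bound $\bigl||w_{\tau_n}|-\lambda_\mu n\bigr|\le C\log n$ for large $n$. The only mild care needed is to work on a single full-measure event on which~\eqref{Eq:Kingman} holds \emph{and} only finitely many $\gamma_k$ ($k\le 2n$) exceed $C\log n$, which is standard.
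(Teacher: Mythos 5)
Your proof is correct and follows essentially the same route as the paper: the first part is deduced from the linear escape rate~\eqref{Eq:Kingman} along the (a.s.\ diverging) subsequence $\tau_n$, and the overshoot is controlled by bounding the size of a single increment $|\gamma_{\tau_n}|$ via Borel--Cantelli and the finite exponential moment. The only cosmetic difference is that you bound $\max_{1\le k\le 2n}|\gamma_k|$ directly via a union bound (using $\tau_n\le 2n$ eventually), whereas the paper bounds $|\gamma_n|\le C\log n$ for all large $n$ and then evaluates at $n=\tau_n(w)$; the two are interchangeable.
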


 \begin{proof} The first claim follows from \eqref{Eq:Kingman}. For the second claim, note that the Borel-Cantelli Lemma  and our finite exponential moment assumption on $\mu$ together guarantee that for some $C$ depending on $(\Gamma, |.|, \mu)$, for $\Pr_{\mu}$-almost every $w\in \Omega$, for large enough $n$, we have  $|w_{n-1}^{-1} w_n|\leq C \log n$. On the other hand, by definition of $\tau_n$, we have
\[
|w_{\tau_n(w)}|\ge \lambda_\mu n> |w_{\tau_n(w)-1}|.
\]
Observing that $|w_{\tau_n(w)}|\leq |w_{\tau_{n-1}(w)}|+ |w_{\tau_{n-1}(w)}^{-1} w_{\tau_{n}(w)}|$, the claim follows. 
 \end{proof} 

We deduce that the stopping times $\tau_{n}$ are approximated by $n$ up to an error of  order $(n\log\log n)^{1/2}$.
Here we use the LIL for displacements $|w_{n}|$ cited in Proposition \ref{Prop:LILdistance}.

\begin{lemma}\label{Lem:stopping_time}

There exists a constant $C> 0$ such that for $\Pr_{\mu}$-almost every $w\in \Omega$, for all large enough $n$,
\[
|\tau_n(w)-n|\le C\sqrt{n \log \log n}.
\]
\end{lemma}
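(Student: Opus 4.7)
The plan is to derive the bound on $|\tau_n - n|$ by comparing two independent approximations of the same quantity $|w_{\tau_n}|$: Lemma \ref{Lem:stopping_time-0} approximates it by $\lambda_\mu n$ up to an error of order $\log n$, whereas the law of the iterated logarithm of Proposition \ref{Prop:LILdistance} approximates $|w_k|$ by $\lambda_\mu k$ up to $O(\sqrt{k \log \log k})$ uniformly in all large $k$. Evaluating the LIL estimate at the random time $k = \tau_n$ and combining it with the first approximation should force $\lambda_\mu |\tau_n - n|$ to inherit the $O(\sqrt{n \log \log n})$ error, which is exactly the claim.

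First I would fix a trajectory $w \in \Omega$ outside the null sets where either Lemma \ref{Lem:stopping_time-0} or Proposition \ref{Prop:LILdistance} fails. Proposition \ref{Prop:LILdistance} ensures that the sequence $(|w_k| - \lambda_\mu k)/\sqrt{2k \log \log k}$ has accumulation set $[-\sigma_\mu, \sigma_\mu]$, so there exist a constant $C_1 > 0$ and an integer $N(w)$ with
\[
\bigl| |w_k| - \lambda_\mu k \bigr| \le C_1 \sqrt{k \log \log k} \quad \text{for every } k \ge N(w).
\]
Second, since $\tau_n \to \infty$ almost surely by Lemma \ref{Lem:stopping_time-0}, for $n$ large enough one has $\tau_n \ge N(w)$, and substituting $k = \tau_n$ yields
\[
\bigl| |w_{\tau_n}| - \lambda_\mu \tau_n \bigr| \le C_1 \sqrt{\tau_n \log \log \tau_n}.
\]
Third, combining this with the bound $\bigl| |w_{\tau_n}| - \lambda_\mu n \bigr| \le C_2 \log n$ of Lemma \ref{Lem:stopping_time-0} through the triangle inequality gives
\[
\lambda_\mu |\tau_n - n| \le C_2 \log n + C_1 \sqrt{\tau_n \log \log \tau_n}.
\]
Since $\tau_n / n \to 1$ almost surely (again by Lemma \ref{Lem:stopping_time-0}), one has $\tau_n \le 2n$ for all sufficiently large $n$, so the right-hand side is $O(\sqrt{n \log \log n})$, and the conclusion follows after absorbing the $\log n$ term.

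The argument is conceptually short and relies essentially on inputs already proven earlier in the paper; the only subtlety to handle carefully is the substitution $k = \tau_n$ in the LIL bound. This is legitimate because the LIL of Proposition \ref{Prop:LILdistance} is a pointwise $\limsup$-statement: for $\Pr_\mu$-a.e.\ trajectory the bound holds for every $k$ beyond a (random but finite) threshold $N(w)$, and since $\tau_n(w) \to \infty$ almost surely, this threshold is eventually crossed regardless of the randomness of $\tau_n$.
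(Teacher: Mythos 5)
Your proof is correct and follows essentially the same route as the paper: you invoke Proposition \ref{Prop:LILdistance} to get a uniform-in-$k$ bound of the form $\bigl||w_k|-\lambda_\mu k\bigr|\le C_1\sqrt{k\log\log k}$, substitute $k=\tau_n$ (which is legitimate because the estimate is a.s.\ pointwise in $k$ beyond a random threshold, and $\tau_n\to\infty$), and combine with the overshoot bound and the asymptotic $\tau_n/n\to 1$ from Lemma \ref{Lem:stopping_time-0} via the triangle inequality. This matches the paper's argument step for step.
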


\begin{remark} Recall the constant $\sigma_{\mu}$ from Proposition \ref{Prop:LILdistance}. A law of the iterated logarithm with limiting interval $[-\sigma_{\mu}/\lambda_{\mu}, \sigma_{\mu}/\lambda_{\mu}]$ can in fact be obtained for the sequence $(2n \log \log n)^{-1/2}(\tau_n(w)-n)$. The proof below shows all accumulation points belong to this interval. The converse inclusion can be handled by a thickening argument as in \cite[Section 4.4]{BenardWinding}. For conciseness, we do not pursue in this direction. 
\end{remark}

\proof
By Proposition \ref{Prop:LILdistance},
there exists a constant $C> 0$ satisfying that for $\Pr_{\mu}$-almost every $w\in \Omega$, for all large enough $n$,
\[
||w_n|-\lambda_\mu n| \le C\sqrt{n \log \log n}.
\]
Specifying to the subsequence $(\tau_{n})$, we get for large $n$, 
\[
||w_{\tau_n(w)}|-\lambda_\mu \tau_n(w)|\le C\sqrt{\tau_n(w) \log \log \tau_n(w)}.
\]
By Lemma \ref{Lem:stopping_time-0}, we deduce that for $\Pr_{\mu}$-almost every $w\in \Omega$, for all large enough $n$,
\begin{align*}
|\lambda_\mu n-\lambda_\mu \tau_n(w)|
&\le ||w_{\tau_n(w)}|-\lambda_\mu n| +||w_{\tau_n(w)}|-\lambda_\mu \tau_n(w)|\\
&\le (1+o(1))C\sqrt{n\log \log n}.
\end{align*}
This concludes the claim by redefining the constant $C$ to be $2C/\lambda_\mu$.
\qed

\bigskip

We will use the following general lemma to control the partial increments of a sum of i.i.d.\ variables in $\R$ along a short time interval of the order $(n\log\log n)^{1/2}$.

\begin{lemma}\label{Lem:LDP}
Consider a sequence of i.i.d.\ real-random variables $(X_n)_{n\ge 1}$ with finite exponential moment and zero average. Fix $M>0$. 
Set $S_n:=\sum_{i=1}^n X_i$ and  $k_n:=M\sqrt{n\log \log n}$.
Then we have almost surely as $n \to+ \infty$
\[
\frac{1}{k_n}\max\big\{ |S_i-S_n| \ :  i \text{ such that } \ |i-n| \le k_n\big\}\to 0.
\]
\end{lemma}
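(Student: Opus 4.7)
The plan is a standard Borel--Cantelli argument via exponential concentration. The key input is a Bernstein-type deviation inequality for $S_m$ under the hypotheses on $X_1$. Since the cumulant generating function $t\mapsto \log\E e^{tX_1}$ is finite on a neighborhood of $0$, vanishes and has vanishing derivative at $0$ (as $\E X_1 = 0$), hence is $O(t^2)$ near $0$, a classical Chernoff argument gives constants $c>0$ (depending only on the law of $X_1$) such that for every $m\ge 1$ and $t\ge 0$,
\[
\Pr(|S_m|\ge t)\le 2\exp\bigl(-c\min(t^2/m,\, t)\bigr).
\]

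Next, fix $\epsilon \in (0,1]$ and apply this to the centered increment $S_i-S_n$, which is distributed as $S_{|i-n|}$, at the level $t=\epsilon k_n$ with $m=|i-n|\le k_n$. Since $\epsilon\le 1$, one has $t\le k_n$ so $t^2/m \ge \epsilon^2 k_n$ in the case $m>t$, while $t\ge \epsilon^2 k_n$ in the case $m\le t$; in both cases
\[
\Pr(|S_i-S_n|\ge \epsilon k_n) \le 2\exp(-c\epsilon^2 k_n).
\]
A union bound over the at most $2k_n+1$ integers $i$ with $|i-n|\le k_n$ yields
\[
\Pr\Bigl(\max_{|i-n|\le k_n}|S_i-S_n|\ge \epsilon k_n\Bigr) \le (4k_n+2)\exp(-c\epsilon^2 k_n).
\]

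Since $k_n=M\sqrt{n\log\log n}\to\infty$, the right-hand side decays faster than any polynomial in $n$, hence is summable. Borel--Cantelli then gives almost surely
\[
\limsup_{n\to\infty}\,\frac{1}{k_n}\max_{|i-n|\le k_n}|S_i-S_n|\le \epsilon,
\]
and a countable intersection over $\epsilon=1/j$, $j\ge 1$, produces the stated almost sure convergence to $0$.

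I do not anticipate a genuine obstacle. The only point worth being attentive to is justifying the Bernstein inequality under the finite exponential moment assumption (as opposed to bounded increments), but this is classical. The large gap between the polynomial union-bound factor $k_n$ and the stretched-exponential concentration factor $\exp(-c\epsilon^2\sqrt{n\log\log n})$ makes the summability step effortless, with room to spare.
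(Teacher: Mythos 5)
Your proof is correct, and it streamlines the paper's argument by reorganizing the concentration step. Both proofs run the same Borel--Cantelli scheme, but they handle the union bound over $\{i : |i-n|\le k_n\}$ differently. The paper splits the indices into a bulk range $|i-n|\ge(\log n)^2$, where Cram\'er's large-deviation estimate $\Pr(|S_m|\ge\varepsilon m)\le e^{-c_\varepsilon m}$ applied at $m=|i-n|$ gives $e^{-c_\varepsilon(\log n)^2}$ per term, and a short range $|i-n|<(\log n)^2$, where the deviation $|S_i-S_n|$ is instead controlled by a crude union bound on the tails of the individual increments $X_j$. You avoid this two-range split entirely by invoking a Bernstein-type sub-exponential bound $\Pr(|S_m|\ge t)\le 2\exp(-c\min(t^2/m,\,t))$, which holds uniformly over $m\ge 1$ for centered i.i.d.\ variables with finite exponential moment; with $t=\varepsilon k_n$ and $m\le k_n$ one has $\min(t^2/m,t)\ge\varepsilon^2 k_n$, so every term is bounded by $2e^{-c\varepsilon^2 k_n}$ and the union bound gives $O(k_n e^{-c\varepsilon^2 k_n})$, which is stretched-exponentially small in $n$ since $k_n$ grows like $\sqrt{n\log\log n}$. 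Your route buys a single uniform estimate in place of the paper's case analysis, at the modest price of appealing to the two-regime Bernstein inequality rather than the bare Cram\'er bound; both are standard, and both lead cleanly to summability.
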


\proof
Since $\Eb X_i=0$,
the classical large deviation estimate shows that for every $\e>0$ there exist constants $c_\e>0$ and $N_\e$ such that
for all $n \ge N_\e$,
\[
\Pb\big(|S_n| \ge \e n\big) \le e^{-c_\e n}.
\] 
For every $\e>0$,
we have
\begin{align*}
\Pb\Big(\max_{(\log n)^2 \le |i-n| \le k_n} |S_i-S_n| \ge \e k_n\Big)
\le \sum_{(\log n)^2 \le |i-n|\le k_n}\Pb\Big(|S_i-S_n| \ge \e k_n\Big).
\end{align*}
Noting that for $i_1<i_2$, the distribution of $S_{i_2}-S_{i_1}$ and that of $S_{i_2-i_1}$ coincide,
we find that for all $n$ such that $(\log n)^2\ge N_\e$ the right hand side of the above inequality is bounded by 
\[
\sum_{(\log n)^2 \le |i-n|\le k_n}e^{-c_\e |i-n|} \le 2 k_n e^{-c_\e (\log n)^2}=2M\sqrt{n \log \log n}\,e^{-c_\e (\log n)^2}.
\]
Furthermore, since $X_i$ has finite exponential moment, 
for every $\e>0$ there exists a constant $c_\e'>0$ such that $\Pb(|X_i|\ge \e k)\le e^{-c'_\e k}$ for all large enough $k \ge 0$.
Thus for all large enough $n$,
\begin{align*}
&\Pb\Big(\max_{|i-n|\le (\log n)^2}|S_i-S_n| \ge \e k_n\Big)
\le \sum_{|i-n| \le (\log n)^2}\Pb\Big(|S_i-S_n| \ge \e k_n\Big)\\
&\qquad \qquad \qquad \le \sum_{|i-n|\le (\log n)^2}|i-n|e^{-c'_\e k_n/(\log n)^2}\le 2(\log n)^4 e^{-c'_\e k_n/(\log n)^2}\le e^{-n^{1/4}}.
\end{align*}
Combining the above estimates shows that
for every $\e>0$, for all large enough $n$,
\[
\Pb\Big(\max_{|i-n|\le k_n}|S_i-S_n| \ge \e k_n\Big)\le 2M\sqrt{n \log \log n}\,e^{-c_\e (\log n)^2}+e^{-n^{1/4}}.
\]
The right hand side is summable in $n$ for each $\e>0$, whence the claim follows from the Borel-Cantelli lemma.
\qed
\bigskip

We are now able to conclude the proof of the approximation statement Proposition \ref{Lem:marginal}.
\proof
Recall that $\Eb \f(\gamma_i)=0$ and $\mu$ has finite exponential moment. In particular, the  sequence of random variables $(\varphi(w_{n}))_{w\sim \Pr_{\mu}}$ corresponds to the trajectory of a random walk on $\R$ with i.i.d.\ centered increments of finite exponential moment. By Lemma \ref{Lem:LDP}, for each $M>0$, 
we deduce that $\Pr_{\mu}$-almost surely as $n \to \infty$,
\[
\frac{1}{M\sqrt{n\log \log n}}\max\left\{|\f(w_i)-\f(w_n)| \ : \ \text{$i$  such that }|i-n|\le M\sqrt{n\log \log n}\right\}\to 0.
\]
Lemma \ref{Lem:stopping_time} shows that for a constant $C> 0$,  $\Pr_{\mu}$-almost surely, for all large enough $n$,
\[
|\tau_n(w)-n|\le C\sqrt{n\log \log n}.
\]
Therefore, by taking $M=C$, we obtain almost surely
\begin{equation}\label{Eq:Lem:marginal}
\frac{1}{\sqrt{n\log \log n}}|\f(w_{\tau_n(w)})-\f(w_n)| \to 0 \quad \text{as $n \to \infty$}.
\end{equation}
We now relate $w_{\tau_n(w)}$ to the limiting geodesic ray $r_{\xi_{w}}$ associated to $w$.
Using that $\mu$ has finite exponential moment,
there exists a constant $D>0$ such that the following holds:
For $\nu_\mu$-almost every $\xi \in \partial \Gamma$ for all large enough $n$,
\[
|w_n^{-1} r_\xi(|w_{n}|)| \le D \log n.
\] 
(A more general result has been established under a weaker moment assumption \cite[Theorem D]{ChoiCLT}.)
As $\tau_{n}/n\to 1$ a.s., we can specify  to $\tau_{n}$ and get almost surely, for large $n$, 
\[
|w_{\tau_{n}(w)}^{-1} r_\xi(|w_{{\tau_{n}(w)}}|)| \le 2 D \log n.
\] 
On the other hand, by Lemma \ref{Lem:stopping_time-0},
\[
||w_{{\tau_{n}(w)}}| -\lambda_\mu n| \leq C \log n .
\]
Therefore by the triangle inequality, we obtain 
\begin{equation}\label{Eq:Lem:marginal2}
|w_{\tau_{n}(w)}^{-1} r_\xi(\lambda_\mu n)| \le (2 D+C) \log n.
\end{equation}
Since $\varphi$ is Lipschitz, the claim now follows from  the combination of \eqref{Eq:Lem:marginal} and  \eqref{Eq:Lem:marginal2}.
\qed

\subsection{A joint LIL and singularity of harmonic measures}
We keep the notations $(\Gamma, |.|, r_{\xi}, \mu)$ from the Section \ref{Sec:limit_laws} and consider $\varphi : \Gamma\rightarrow \R$  such that $\Eb \varphi_{\star}\mu=0$. We also consider $\pi$ a probability measure on $\Gamma\times \Gamma$ such that both marginals are equal to $\mu$.
We show a joint LIL for the projection under $\varphi$ of a pair of geodesic rays on $\Gamma$ selected randomly according to the harmonic measure of $\pi$. The noise stability for the $\mu$-walk on $\Gamma$  follows.

\bigskip

 We denote $\Pr_{\pi}$ the induced probability measure on the space (double) trajectories $\mb{w}=(w^{(1)}_{n}, w^{(2)}_{n})_{n\geq0}$ (defined similarly to $\Pr_{\mu}$ from Section \ref{Sec:limit_laws}). Note that $\Pr_{\pi}$-almost every trajectory $\mb{w}$ converges in $\partial \Gamma\times \partial \Gamma$. We write $\nu_{\pi}$ for the  distribution of the limiting point, and call it the  harmonic measure for the $\pi$-random walk. We also write $\f\times \f: \Gamma^2\rightarrow \R^2, (\gamma_{1}, \gamma_{2})=(\varphi(\gamma_{1}), \varphi(\gamma_{2}))$

\begin{proposition}\label{Prop:jointLIL}
Let $A_{\nu_\pi}$ be the $2\times 2$ matrix   given by $A_{\nu_\pi}:= \lambda_\mu^{-1} \Cov((\f\times \f)_\star \pi)$. 
Then for $\nu_\pi$-almost every $\mb{\xi}=(\xi^{(1)},\xi^{(2)}) \in \partial \Gamma \times \partial \Gamma$,
the set of accumulation points of the sequence 
\[
\frac{\(\f \circ r_{\xi^{(1)}}(n), \f \circ r_{\xi^{(2)}}(n)\)}{\sqrt{2n\log \log n}} \quad \text{for $n>2$}
\]
is equal to $A_{\nu_\pi}^{1/2}\wbar B(0, 1)$. In particular, $A_{\nu_\pi}$ depends only on the measure class of $\nu_\pi$.
\end{proposition}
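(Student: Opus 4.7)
The plan is to reduce the joint law of iterated logarithm for geodesic rays to the classical multivariate Hartman--Wintner LIL for i.i.d.\ sums in $\R^2$, using Proposition~\ref{Lem:marginal} as the bridge. Since both marginals of $\pi$ equal $\mu$, under $\Pr_\pi$ a typical trajectory $\mb w=(w^{(1)}_n,w^{(2)}_n)$ converges almost surely to a pair $(\xi^{(1)},\xi^{(2)})\in(\partial\Gamma)^2$ of law $\nu_\pi$, and each coordinate process $(w^{(i)}_n)_n$ is itself a $\mu$-random walk. Applying Proposition~\ref{Lem:marginal} to each marginal gives, $\Pr_\pi$-almost surely, as $n\to\infty$,
$$
\frac{1}{\sqrt{n\log\log n}}\bigl|\varphi(r_{\xi^{(i)}}(\lambda_\mu n))-\varphi(w^{(i)}_n)\bigr|\to 0,\qquad i=1,2.
$$

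Setting $m:=\lambda_\mu n$ and using $\sqrt{2m\log\log m}\sim\sqrt{\lambda_\mu}\,\sqrt{2n\log\log n}$, this identifies the accumulation set of $(\varphi\circ r_{\xi^{(1)}}(m),\varphi\circ r_{\xi^{(2)}}(m))/\sqrt{2m\log\log m}$ as $\Pr_\pi$-a.s.\ equal to $\lambda_\mu^{-1/2}$ times the accumulation set of $(\varphi(w^{(1)}_n),\varphi(w^{(2)}_n))/\sqrt{2n\log\log n}$. The latter is the normalized partial-sum sequence $S_n/\sqrt{2n\log\log n}$ for the i.i.d.\ $\R^2$-valued variables $X_i:=(\varphi(\gamma^{(1)}_i),\varphi(\gamma^{(2)}_i))$ with common law $(\varphi\times\varphi)_\star\pi$; these are centered (each marginal has zero mean by hypothesis) and have finite exponential moment. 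The multivariate Hartman--Wintner LIL then yields almost-sure cluster set $\Sigma^{1/2}\wbar B(0,1)$ with $\Sigma:=\Cov((\varphi\times\varphi)_\star\pi)$. Composing the two identifications yields the cluster set $\lambda_\mu^{-1/2}\Sigma^{1/2}\wbar B(0,1)=A_{\nu_\pi}^{1/2}\wbar B(0,1)$, proving the main claim.

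For the final ``depends only on the measure class of $\nu_\pi$'' assertion, note that the cluster set on the left-hand side is a deterministic function of $\mb\xi\in(\partial\Gamma)^2$, which the main claim identifies $\nu_\pi$-a.s.\ with $A_{\nu_\pi}^{1/2}\wbar B(0,1)$. Hence if $\pi'$ is any other coupling with marginals $\mu$ such that $\nu_{\pi'}$ is equivalent to $\nu_\pi$, applying the same argument to $\pi'$ forces $A_{\nu_{\pi'}}^{1/2}\wbar B(0,1)=A_{\nu_\pi}^{1/2}\wbar B(0,1)$, whence $A_{\nu_{\pi'}}=A_{\nu_\pi}$.

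The main obstacle is invoking the multivariate Hartman--Wintner LIL in exactly the form needed. The upper bound (cluster set contained in the ellipsoid) follows from the one-dimensional LIL applied via the Cram\'er--Wold device to every linear form $\alpha_1\varphi(w^{(1)}_n)+\alpha_2\varphi(w^{(2)}_n)$, which is a centered i.i.d.\ real sum with finite exponential moment. The lower bound (every point of the ellipsoid is achieved as a cluster value) is the classical nontrivial half of Hartman--Wintner and can either be invoked as a black box or obtained by the standard blocking and truncation argument, noting that the finite exponential moment is more than sufficient to verify its hypotheses. The reduction step via Proposition~\ref{Lem:marginal} is mechanical once that proposition is available.
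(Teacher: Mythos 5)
Your proof follows essentially the same route as the paper: reduce to the classical multivariate Hartman--Wintner LIL for the i.i.d.\ partial sums $(\varphi(w_n^{(1)}),\varphi(w_n^{(2)}))$ via the approximation in Proposition~\ref{Lem:marginal} applied coordinatewise, then rescale by $\lambda_\mu$ to pass between time scales. The identification of the cluster set as $\Sigma^{1/2}\wbar B(0,1)$ with $\Sigma=\Cov((\varphi\times\varphi)_\star\pi)$, the $\lambda_\mu^{-1/2}$ rescaling, and the ``measure class'' argument at the end all match.

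One small gap worth flagging: in your final paragraph you conclude from $A_{\nu_{\pi'}}^{1/2}\wbar B(0,1)=A_{\nu_\pi}^{1/2}\wbar B(0,1)$ that $A_{\nu_{\pi'}}=A_{\nu_\pi}$, but this implication needs a short justification that a positive semi-definite symmetric matrix is uniquely determined by the (possibly degenerate) ellipsoid $A^{1/2}\wbar B(0,1)$. The paper supplies it: restrict to the common image to make both matrices invertible, observe that $A_1^{-1/2}A_2^{1/2}$ preserves $\wbar B(0,1)$ and hence is orthogonal, then compare with the transpose and use symmetry to get $A_1=A_2$. Adding this two-line argument would close the gap; otherwise the proof is sound and identical in spirit to the paper's.
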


In the above,
$\wbar B(0, 1)=\left\{(t_1, t_{2}) \in \R^{2} \ : \ t_1^2+t_{2}^2 \le 1\right\}$, and $\Cov(.)$ refers to the covariance matrix for probability measures on $\R^2$.

\proof[Proof]
Lemma \ref{Lem:marginal} yields that $\Pr_{\pi}$-almost surely, for each $i=1, 2$,
\begin{equation}\label{Eq:Prop:marginal}
\frac{1}{\sqrt{n\log \log n}}|\f(w_n^{(i)})-\f(r_{\xi^{(i)}}(\lambda_\mu n))| \to 0 \quad \text{as $n \to \infty$}.
\end{equation}
By the classical LIL on $\big(\f(w_n^{(1)}), \f(w_n^{(2)})\big)$ for $n\geq 0$,
the following holds:
for the covariance matrix $\SS_\pi:=\Cov((\f\times \f)_\star \pi)$,
for $\Pr_{\pi}$-almost every  $(w_n^{(1)}, w_n^{(2)})$,
the set of accumulation points of the sequence 
\[
\frac{1}{\sqrt{2n \log \log n}}\(\f(w_n^{(1)}), \f(w_n^{(2)})\) \quad \text{for $n\geq 3$},
\]
is equal to $\SS_\pi^{1/2} \wbar B(0, 1)$ in $\R^{2}$.
In view of \eqref{Eq:Prop:marginal}, this estimate still holds with $\f(r_{\xi^{(i)}}(\lambda_\mu n))$ in the place of $\f(w_n^{(i)})$. 
Rescaling by the factor $\lambda_\mu$, we deduce that  for $\nu_\pi$-almost every $\mb{\xi}\in (\partial \Gamma)^2$, the sequence
\[
\frac{1}{\sqrt{2n \log \log n}}\(\f(r_{\xi^{(1)}}(n)), \f(r_{\xi^{(2)}}(n))\) \quad \text{for $n\geq 3$},
\]
has the set of accumulation points equal to $A^{1/2}\wbar B(0, 1)$ where $A:=\lambda_\mu^{-1}\SS_\pi$.
This shows the first claim. 

For the second claim, note we have established that the set $A^{1/2}\wbar B(0, 1)$ depends only on the measure class of $\nu_\pi$ on $(\partial \Gamma)^2$.
Since $A$ is a  positive semi-definite symmetric matrix,
$A$ is uniquely determined by the set $A^{1/2}\wbar B(0, 1)$.
Indeed, if $A_1^{1/2}\wbar B(0, 1)=A_2^{1/2}\wbar B(0, 1)$ for such matrices $A_i$, $i=1, 2$,
then up to restricting to their common image we assume that the $A_i$'s are invertible.
We then get that $A_1^{-1/2}A_2^{1/2}$ is orthogonal. Comparing with its transpose and using that the $A_{i}$'s are symmetric, we get $A_1=A_2$. Therefore letting $A_{\nu_\pi}=\lambda_\mu^{-1}\SS_\pi$
concludes the claim.
\qed

\bigskip
We deduce that  noisy pairs $(w_{n}, w_{n}^{\rho})$ corresponding to different parameters $\rho$ have mutually singular limiting harmonic measures $\nu_{\pi^{\rho}}$.

\begin{corollary} \label{singulairity-nupirho}
Keep the previous notations and assume further that $\f$ is not identically zero on  the support of $\mu$.
Then the harmonic measures $\nu_{\pi^\rho}$ on $\partial \Gamma \times \partial \Gamma$ for $\pi^\rho$-random walks are mutually singular for all $\rho \in [0, 1]$.
\end{corollary}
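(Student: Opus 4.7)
The plan is to apply Proposition \ref{Prop:jointLIL} to each $\pi^\rho$, show that the associated matrices $A_{\nu_{\pi^\rho}}$ depend injectively on $\rho$, and then read off mutual singularity from the fact that the accumulation set appearing in the joint LIL is a deterministic Borel function of $\mb{\xi}$ --- the same function for every choice of $\rho$. First I would compute $A_{\nu_{\pi^\rho}}=\lambda_\mu^{-1}\Cov((\varphi\times\varphi)_\star \pi^\rho)$ in closed form. Both marginals of $\pi^\rho$ equal $\mu$, so each coordinate of $(\varphi\times\varphi)_\star\pi^\rho$ is centered with variance $\sigma^2:=\Var(\varphi_\star\mu)$; and a direct computation using $\pi^\rho=\rho\,\mu\otimes\mu+(1-\rho)\mu_\diag$ gives the off-diagonal entry $(1-\rho)\sigma^2$. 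Hence
\[
A_{\nu_{\pi^\rho}}=\frac{\sigma^2}{\lambda_\mu}\begin{pmatrix} 1 & 1-\rho \\ 1-\rho & 1 \end{pmatrix}.
\]
The assumption that $\varphi$ is not identically zero on $\supp \mu$, together with $\Eb\varphi_\star\mu=0$, forces $\sigma^2>0$, so $\rho\mapsto A_{\nu_{\pi^\rho}}$ is injective on $[0,1]$ (its eigenvalues are $\sigma^2\rho/\lambda_\mu$ and $\sigma^2(2-\rho)/\lambda_\mu$). Equivalently, the closed ellipses $E_\rho:=A_{\nu_{\pi^\rho}}^{1/2}\wbar B(0,1)\subset\R^2$ are pairwise distinct.

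To pass from distinctness of the ellipses to mutual singularity of the harmonic measures, let $\mathrm{Acc}(\mb{\xi})\subset\R^2$ denote the set of accumulation points of the sequence $a_n(\mb{\xi}):=(2n\log\log n)^{-1/2}(\varphi\circ r_{\xi^{(1)}}(n),\varphi\circ r_{\xi^{(2)}}(n))$. This is a Borel function of $\mb{\xi}$ defined independently of $\rho$. Proposition \ref{Prop:jointLIL} states that $\mathrm{Acc}(\mb{\xi})=E_\rho$ for $\nu_{\pi^\rho}$-a.e.\ $\mb{\xi}$. Given $\rho\neq\rho'$, pick a point $p$ in the symmetric difference $E_\rho\triangle E_{\rho'}$ and a small open ball $U$ around $p$ disjoint from one of the two ellipses, say $E_{\rho'}$. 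The Borel set $G:=\{\mb{\xi}:a_n(\mb{\xi})\in U \text{ for infinitely many } n\}$ then has $\nu_{\pi^\rho}$-measure $1$ (since $p\in\mathrm{Acc}(\mb{\xi})$ for $\nu_{\pi^\rho}$-a.e.\ $\mb{\xi}$) and $\nu_{\pi^{\rho'}}$-measure $0$ (since $\mathrm{Acc}(\mb{\xi})\cap U=\emptyset$ for $\nu_{\pi^{\rho'}}$-a.e.\ $\mb{\xi}$), establishing $\nu_{\pi^\rho}\perp\nu_{\pi^{\rho'}}$.

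The only substantive ingredient is the covariance computation; the rest is a direct repackaging of Proposition \ref{Prop:jointLIL}, so I do not anticipate any real obstacle.
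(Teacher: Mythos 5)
Your proof is correct and follows essentially the same route as the paper: the same covariance computation showing $\rho\mapsto A_{\nu_{\pi^\rho}}$ is injective (using $\Var(\varphi_\star\mu)>0$), followed by applying Proposition \ref{Prop:jointLIL}. The only difference is that you spell out explicitly, via the Borel set $G$ of trajectories visiting a small ball $U$ infinitely often, the elementary step from ``distinct a.e.\ accumulation sets'' to mutual singularity, which the paper leaves implicit in the final clause of Proposition \ref{Prop:jointLIL} (``$A_{\nu_\pi}$ depends only on the measure class of $\nu_\pi$'').
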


\proof
A direct computation yields
\[
\Cov((\f\times \f)_\star\pi^\rho)=\Var (\f_{\star}\mu)
\begin{pmatrix}
1 & 1-\rho\\
1-\rho & 1
\end{pmatrix},
\]
where $\Var (\f_{\star} \mu):=\Eb_{\mu}( \f(\gamma)^2)$.
Note that $\Var (\f_{\star} \mu)>0$.
Indeed, otherwise $\f(\gamma)=0$ for all $\gamma$ in the support of $\mu$, contradicting our assumptions. 
Hence,  for $\rho, \rho' \in [0, 1]$
with $\rho \neq \rho'$,
we have $\Cov((\f\times\f)_\star\pi^\rho) \neq \Cov((\f\times \f)_\star \pi^{\rho'})$.
Since by Proposition \ref{Prop:jointLIL} the matrix $A_{\nu_{\pi^\rho}}=\lambda_\mu^{-1}\Cov((\f\times \f)_\star\pi^\rho)$ depends only on the measure class of $\nu_{\pi^\rho}$ for each $\rho \in [0, 1]$,
the  claim follows.
\qed

\bigskip
We still need to convert  limiting singularity into an asymptotic separation estimate. To this end, we need a few preliminary remarks on the behavior of the total variation norm with respect to weak convergence. 

Given a signed Borel measure $\eta$ on a metric space $Z$,
let
\[
\|\eta\|_\TV:=\sup\{|\eta(B)| \ : \ \text{$B$ is Borel in $Z$}\}.
\]
\begin{remark}\label{Rem:TV} Other standard definitions of total variation norm use a variant, which we denote by $\|.\|$. Namely, 
let $\eta=\eta_+ - \eta_-$ be the Hahn-Jordan decomposition, i.e., $\eta_+$ and $\eta_-$ are non-negative Borel measures supported on disjoint Borel sets $Z_+$ and $Z_-$ in $Z$ respectively.
Then  $\|\eta\|:=(\eta_+ + \eta_-)(Z)$.
Note that $\|\eta\| \leq 2\|\eta\|_\TV\leq 2\|\eta\|$. Moreover, if $\eta=\nu^{(1)}-\nu^{(2)}$ for two Borel probability measures $\nu^{(1)}$ and $\nu^{(2)}$, then\footnote{To check this identity, note that
 $\nu^{(1)}(B)-\nu^{(2)}(B)=\nu^{(2)}(Z\smallsetminus B)-\nu^{(1)}(Z\smallsetminus B)$ for every measurable $B\subseteq Z$. Thus $\|\eta\|_\TV=\frac{1}{2}\sup\{|\eta(B_{1})|+|\eta(B_{2})| \ : \ Z=B_{1}\sqcup B_{2}\}= \frac{1}{2}\|\eta\|$.
} we have equality $\|\eta\| = 2\|\eta\|_\TV$. 
\end{remark}

\begin{lemma}\label{Lem:weak} 
%\marginpar{Why not cite only Th 4.8.1?}
If a sequence of Borel probability measures $(\nu_n^{(i)})_{n\geq 0}$ on a metric space $Z$ converges weakly to a Borel probability measure $\nu^{(i)}$ on $Z$ for each $i=1, 2$,
then
\[
\liminf_{n \to \infty}\|\nu_n^{(1)}-\nu_n^{(2)}\|_\TV \ge \|\nu^{(1)}-\nu^{(2)}\|_\TV.
\]
\end{lemma}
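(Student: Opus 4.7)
The plan is to reduce the question to a supremum over continuous test functions and then invoke the hypothesized weak convergence. Set $\eta := \nu^{(1)} - \nu^{(2)}$ and $\eta_n := \nu_n^{(1)} - \nu_n^{(2)}$. By Remark~\ref{Rem:TV} we have $2\|\eta\|_\TV = \|\eta\|$ and $2\|\eta_n\|_\TV = \|\eta_n\|$, where $\|\cdot\|$ denotes the Hahn--Jordan total variation. It is therefore enough to prove $\|\eta\| \le \liminf_{n} \|\eta_n\|$.

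The main input I would use is the dual formula
\[
\|\eta\| \;=\; \sup\Bigl\{ \int_Z f\,d\eta \ : \ f \text{ continuous with } \|f\|_\infty \le 1 \Bigr\},
\]
valid for any finite signed Borel measure $\eta$ on a sufficiently nice metric space (e.g.\ a Polish space, which covers the applications $Z = \Gamma \times \Gamma$ and $Z = (\Gamma \cup \partial\Gamma)^2$). To establish this formula I would take the Hahn--Jordan decomposition $\eta = \eta_+ - \eta_-$ carried by disjoint Borel sets $Z_\pm$, use inner regularity of the finite measures $\eta_\pm$ to choose disjoint closed subsets $C_\pm \subseteq Z_\pm$ with $\eta_\pm(Z_\pm \smallsetminus C_\pm) < \e$, and invoke Urysohn's lemma (the metric space $Z$ is normal) to produce a continuous $f : Z \to [-1, 1]$ with $f \equiv 1$ on $C_+$ and $f \equiv -1$ on $C_-$. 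Splitting $\int f\,d\eta$ over $C_+$, $C_-$, and the complement, and bounding the latter contribution by $|\eta|(Z \smallsetminus (C_+ \cup C_-)) \le 2\e$, yields $\int f\,d\eta \ge \|\eta\| - 4\e$; the reverse inequality $\int f\,d\eta \le \|\eta\|$ is immediate from $|f|\le 1$.

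With the dual formula in hand, the conclusion is immediate. For any admissible $f$, weak convergence $\nu_n^{(i)} \to \nu^{(i)}$ gives $\int f\,d\eta_n \to \int f\,d\eta$, and the dual formula applied to each $\eta_n$ yields $\bigl|\int f\,d\eta_n\bigr| \le \|\eta_n\|$ for every $n$. Hence
\[
\int f\,d\eta \;=\; \lim_{n\to\infty}\int f\,d\eta_n \;\le\; \liminf_{n\to\infty}\|\eta_n\| \;=\; 2\liminf_{n\to\infty}\|\eta_n\|_\TV.
\]
Taking the supremum over admissible $f$ gives $\|\eta\| \le 2\liminf_n \|\eta_n\|_\TV$, which by Remark~\ref{Rem:TV} is precisely the asserted inequality.

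The only real (and minor) obstacle is justifying the dual formula, which rests on inner regularity of the signed measure $\eta$. This is automatic on Polish metric spaces and in particular on all spaces arising in the paper, so no additional hypothesis is needed for the applications; if one wants the lemma in full generality for an arbitrary metric space, one should restrict attention to tight (equivalently Radon) Borel probability measures, which is the standard convention behind the notion of weak convergence used here.
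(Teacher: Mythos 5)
Your proof is correct and follows essentially the same route as the paper's: reduce $\|\cdot\|_\TV$ to the Hahn--Jordan variation $\|\cdot\|$ via Remark~\ref{Rem:TV}, then invoke lower semicontinuity of $\|\cdot\|$ under weak convergence. The only difference is that the paper cites this last fact from Bogachev (Theorem 4.8.1), whereas you prove it directly via the dual representation $\|\eta\|=\sup\{\int f\,d\eta : f \text{ continuous},\ \|f\|_\infty\le 1\}$, correctly flagging that this requires inner regularity (automatic on the Polish spaces arising in the paper) and establishing it via Hahn--Jordan plus Urysohn; this makes the argument self-contained at no real extra cost.
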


\proof
If a sequence of signed Borel measures $\{\eta_n\}_{n \geq 0}$ on $Z$ converges weakly to a signed Borel measure $\eta$,
then
\[
\liminf_{n \to \infty}\|\eta_n\| \ge \|\eta\|
\]
(cf.\ \cite[Theorem 4.8.1]{BogachevWeak}).
The claim holds using Remark \ref{Rem:TV}.
\qed
\bigskip

In order to get noise stability with \emph{linear} separation, we also need to know that convergence toward a given boundary point is stable under a certain range of perturbations.

\begin{lemma} \label{conv-perturb}
Consider two sequences $(\omega_{n})_{n\geq 0}, (\omega'_{n})_{n\geq 0}\in \Gamma^\N$, and a boundary point $\xi \in\partial \Gamma$. Assume $\omega_{n}\to \xi $ and $|\omega_{n}|-|\omega^{-1}_{n}\omega'_{n}|\to + \infty$ as $n$ goes to infinity. Then $\omega'_{n}\to \xi$ as well. 
\end{lemma}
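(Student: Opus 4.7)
The plan is to use the characterization of convergence to the Gromov boundary via the Gromov product. Recall that for elements $x,y\in \Gamma$ (with basepoint $\id$) the Gromov product is
\[
\langle x,y\rangle := \tfrac{1}{2}\bigl(|x|+|y|-|x^{-1}y|\bigr),
\]
and in a hyperbolic group a sequence $(z_n)$ converges to a point $\zeta\in\partial\Gamma$ precisely when $\langle z_n, z_m\rangle \to \infty$ as $n,m\to\infty$, with two sequences converging to the same boundary point iff their Gromov product blows up.

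First, I would show that $\langle \omega_n,\omega'_n\rangle \to +\infty$. The triangle inequality gives $|\omega'_n|\ge |\omega_n|-|\omega_n^{-1}\omega'_n|$, hence
\[
\langle \omega_n,\omega'_n\rangle \;=\;\tfrac12\bigl(|\omega_n|+|\omega'_n|-|\omega_n^{-1}\omega'_n|\bigr)\;\ge\; |\omega_n|-|\omega_n^{-1}\omega'_n|,
\]
and the right-hand side tends to $+\infty$ by assumption.

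Next, fix any sequence $(\eta_m)$ with $\eta_m\to\xi$ (one may take $\eta_m=\omega_m$). Since $\Gamma$ is $\delta$-hyperbolic, for some $\delta\ge 0$ we have
\[
\langle \omega'_n,\eta_m\rangle \;\ge\; \min\bigl(\langle \omega'_n,\omega_n\rangle,\,\langle \omega_n,\eta_m\rangle\bigr)-\delta.
\]
The first term on the right goes to infinity by the previous step; the second goes to infinity as $n,m\to\infty$ because $\omega_n\to\xi$ and $\eta_m\to\xi$. Therefore $\langle \omega'_n,\eta_m\rangle\to\infty$, which by the characterization recalled above means $\omega'_n\to\xi$.

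There is no real obstacle here: the argument is a routine exercise in the Gromov product formalism. The only point to handle carefully is to ensure that the hypothesis $|\omega_n|-|\omega_n^{-1}\omega'_n|\to+\infty$ (rather than merely staying bounded below) is precisely what is needed to force $\langle \omega_n,\omega'_n\rangle\to\infty$ and, in particular, $|\omega'_n|\to\infty$, so that $\omega'_n$ escapes to the boundary at all and the Gromov-product criterion becomes applicable.
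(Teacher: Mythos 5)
Your proof is correct and follows essentially the same route as the paper: both use the Gromov-product characterization of boundary convergence together with the hyperbolicity inequality $(\omega'_{k} \,|\, \omega_{l}) \geq \min\{(\omega'_{k} \,|\, \omega_{k}), (\omega_{k} \,|\, \omega_{l})\} - O(1)$. You merely make explicit the triangle-inequality step showing $(\omega_n \,|\, \omega'_n) \ge |\omega_n| - |\omega_n^{-1}\omega'_n| \to \infty$, which the paper leaves to the reader.
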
 

\begin{proof} Write $(x | y):=\frac{1}{2}\left( |x|+|y|-|x^{-1}y| \right)$ the Gromov product on $\Gamma$ based at the identity.  
We need to check that  $\lim_{k,l\to \infty}(\omega'_{k}| \omega_{l})=\infty$.  By hyperbolicity, we know that  
$$(\omega'_{k} | \omega_{l}) \geq \min\{(\omega'_{k} | \omega_{k}), (\omega_{k} | \omega_{l})\} - O(1).$$
Moreover, each term $(\omega'_{k} | \omega_{k})$, $(\omega_{k} | \omega_{l})$ goes to $\infty$ by assumption, whence the result.
\end{proof}

We are finally able to conclude the proof of Theorem \ref{Thm:harmonic}.

\begin{proof}[Proof of Theorem \ref{Thm:harmonic}]
Note that $\Gamma \cup \partial \Gamma$ admits a compact metrizable topology which is compatible with the notion of convergence to the boundary $\partial \Gamma$, and respects the topologies on $\Gamma$ and $\partial \Gamma$ individually (cf.\ \cite[7.2.M]{GromovHyperbolic}). Moreover, given $\alpha\in (0, \lambda_{\mu})$, the definition of the escape rate \eqref{Eq:Kingman} implies 
$\Pr_{\mu}(|w_{n}|\geq \alpha n)\rightarrow 1$ as $n \to \infty$.  
%$\Pr_{\mu}(w\,:\,|w_{n}|\geq \alpha n)\rightarrow 1$ as $n \to \infty$.  

Equip $(\Gamma \cup \partial \Gamma)^2$ with the product topology. It follows from the previous paragraph and Lemma \ref{conv-perturb} that for every $\rho \in [0, 1]$,  and every sequence of probability measures $\sigma^\rho_{n}$ with $\cU^{\alpha n}(\pi^\rho_n, \sigma^\rho_{n})=0$,
the distributions $\sigma^\rho_{n}$ converge weakly to $\nu_{\pi^\rho}$ in $(\Gamma \cup \partial \Gamma)^2$ as $n \to \infty$.
Lemma \ref{Lem:weak} then implies that for $\rho, \rho' \in [0, 1]$,
\[
\liminf_{n \to \infty}\|\sigma^\rho_{n}- \sigma^{\rho'}_{n}\|_\TV \ge \|\nu_{\pi^\rho}-\nu_{\pi^{\rho'}}\|_\TV.
\]
By Corollary \ref{singulairity-nupirho}, we have  $\|\nu_{\pi^\rho}-\nu_{\pi^{\rho'}}\|_\TV\ge 1$ if $\rho\neq \rho'$.
Noting that $\|\sigma^\rho_{n}-\sigma^{\rho'}_{n}\|_\TV \le 1$ for all $n \geq 0$ (cf.\ Remark \ref{Rem:TV}), we get 
$$\cU^{\alpha n}(\pi^\rho_{n}, \pi^{\rho'}_{n})\to 1 \quad \text{as $n \to \infty$},$$
as required.
\end{proof}

\proof[Proof of Theorem \ref{Thm:intro}] Direct consequence of Theorem \ref{Thm:harmonic} applied to any non zero homomorphism $\varphi : \Gamma \rightarrow \R$.
\qed

\bigskip

As in \cite{BenardWinding}, the proof could have been carried in a slightly more general setting, which allows for a finitely generated group $\Gamma$ that is not hyperbolic, but only acts by isometries on a hyperbolic space. In this context, Theorem \ref{Thm:harmonic} and its proof extend verbatim as follows. 

\begin{theorem}\label{Thm:harmonic+}
Let $(X, d)$ be a proper geodesic hyperbolic metric space endowed with a basepoint $o\in X$. Let $\Gamma$ be a finitely generated group of isometries of $(X, d)$. Let $\mu$ be a non-elementary probability measure on $\Gamma$ with  finite exponential moment. Set $\lambda_{\mu}:=\lim_{n\to \infty} \frac{1}{n}\Eb_{g \sim \mu_{n}}(d(g.o,o))$.

Consider a quasi-Lipschitz  map $u:X \to \R$ satisfying the equivariance relation $u(\gamma.x)=\varphi(\gamma)+u(x)$ for every $\gamma\in \Gamma, x\in X$ and some fixed homomorphism $\f:\Gamma\rightarrow \R$. Assume $\Eb \f_\star\mu=0$ and $\f$ is not identically zero on the support of $\mu$.

Then the harmonic measures $\nu_{\pi^\rho}$ on the product of Gromov boundaries $\partial X\times \partial X$ associated to the $\pi^\rho$-random walks are mutually singular for all $\rho \in [0, 1]$.

In particular, 
it holds that for every $\rho, \rho' \in [0, 1]$ with $\rho \neq \rho'$, every $\alpha\in (0, \lambda_{\mu})$ such that
\[
\cU^{\alpha n}(\pi^\rho_{n}*\delta_{(o,o)}, \,\pi^{\rho'}_{n}*\delta_{(o,o)})
 \to 1 \quad \text{as $n \to \infty$}.
\]
\end{theorem}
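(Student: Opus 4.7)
The plan is to run the argument of Theorem \ref{Thm:harmonic} almost verbatim, with the equivariant quasi-Lipschitz height function $u:X\to\R$ playing the role of the homomorphism $\varphi$ evaluated on Cayley-graph vertices. The key identity is that by equivariance $u(w_n\cdot o)=\varphi(w_n)+u(o)$, so the real-valued sequence $(u(w_n\cdot o))_{n\ge 0}$ is, up to an additive constant, a centered i.i.d.\ sum with finite exponential moment, for which the classical LIL applies.

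First I would redo Propositions \ref{Lem:marginal} and its supporting Lemmas \ref{Lem:stopping_time-0} and \ref{Lem:stopping_time} in this setting. Define $\tau_n(w):=\inf\{k\ge 1:d(w_k\cdot o,o)\ge \lambda_\mu n\}$; the almost sure linear escape rate \eqref{Eq:Kingman} (which exists here as $\lambda_\mu=\lim_n n^{-1}\E_{g\sim\mu_n}d(g\cdot o,o)$) together with the Borel--Cantelli bound on jumps gives $\tau_n/n\to 1$ and the logarithmic overshoot. The displacement LIL cited in Proposition \ref{Prop:LILdistance} holds in this generality (see \cite{BenoistQuintCLThyperbolic, ChoiCLT}), so $|\tau_n-n|=O(\sqrt{n\log\log n})$. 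Applying Lemma \ref{Lem:LDP} to the i.i.d.\ sequence $(\varphi(\gamma_i))_{i\ge 1}$ then yields $|\varphi(w_n)-\varphi(w_{\tau_n})|=o(\sqrt{n\log\log n})$ almost surely. The final ingredient is a shadowing estimate: under finite exponential moment, for $\nu_\mu$-a.e.\ $\xi\in\partial X$, one has $d(w_n\cdot o,\, r_\xi(d(w_n\cdot o,o)))=O(\log n)$ almost surely, which is precisely \cite[Theorem D]{ChoiCLT}. Since $u$ is quasi-Lipschitz, this transfers to $|u(w_n\cdot o)-u(r_{\xi_w}(\lambda_\mu n))|=O(\log n)$, giving the analog of Proposition \ref{Lem:marginal}: $(n\log\log n)^{-1/2}|u(r_{\xi_w}(\lambda_\mu n))-\varphi(w_n)-u(o)|\to 0$ a.s.

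With this approximation the joint LIL of Proposition \ref{Prop:jointLIL} now applies to pairs $(u\circ r_{\xi^{(1)}}(n),u\circ r_{\xi^{(2)}}(n))$ under $\nu_{\pi^\rho}$, with accumulation set $A_{\nu_{\pi^\rho}}^{1/2}\wbar B(0,1)$ where $A_{\nu_{\pi^\rho}}=\lambda_\mu^{-1}\Cov((\varphi\times\varphi)_\star\pi^\rho)$. The computation of Corollary \ref{singulairity-nupirho} is unchanged: the covariance has off-diagonal entry $(1-\rho)\Var(\varphi_\star\mu)$, which is positive and distinguishes different $\rho$ because $\varphi$ is not identically zero on $\supp\mu$. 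Hence $A_{\nu_{\pi^\rho}}\ne A_{\nu_{\pi^{\rho'}}}$ for $\rho\ne\rho'$, and the matrix-reconstruction argument of Proposition \ref{Prop:jointLIL} shows the harmonic measures $\nu_{\pi^\rho}$ are pairwise mutually singular on $\partial X\times\partial X$.

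It remains to transfer singularity into the $\cU^{\alpha n}$-separation statement. The space $X\cup\partial X$ admits a compact metrizable topology compatible with the boundary convergence (\cite[7.2.M]{GromovHyperbolic}), so $(X\cup\partial X)^2$ is compact metrizable. One needs the analog of Lemma \ref{conv-perturb} in $X$: if $x_n\in X$ converges to $\xi\in\partial X$ and $x'_n\in X$ satisfies $d(x_n,o)-d(x_n,x'_n)\to\infty$, then $x'_n\to\xi$, which follows directly from the Gromov-product estimate using hyperbolicity of $(X,d)$. Since $\alpha<\lambda_\mu$ and $\Pr_\mu(d(w_n\cdot o,o)\ge\alpha n)\to 1$, any sequence $\sigma_n^\rho$ with $\cU^{\alpha n}(\pi_n^\rho*\delta_{(o,o)},\sigma_n^\rho)=0$ converges weakly to $\nu_{\pi^\rho}$ in $(X\cup\partial X)^2$. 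Lemma \ref{Lem:weak} combined with mutual singularity then gives $\liminf_n\|\sigma_n^\rho-\sigma_n^{\rho'}\|_\TV\ge 1$, hence $\cU^{\alpha n}(\pi_n^\rho*\delta_{(o,o)},\pi_n^{\rho'}*\delta_{(o,o)})\to 1$. The main obstacle is the shadowing lemma in the unrestricted proper geodesic hyperbolic setting, but this is available off the shelf; once granted, every step of the hyperbolic-group proof transposes word-for-word.
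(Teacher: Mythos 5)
Your proposal is correct and follows the paper's intended argument exactly: the paper explicitly states that Theorem \ref{Thm:harmonic} and its proof extend verbatim to this setting, and your write-up is a faithful account of what "verbatim" means here, with the equivariant identity $u(w_n\cdot o)=\varphi(w_n)+u(o)$ tying the geodesic-ray heights to the i.i.d.\ sums, the shadowing estimate from \cite[Theorem D]{ChoiCLT} supplying the only genuinely new external input, and the rest (stopping times, displacement LIL, joint LIL, weak-convergence transfer) transposing mechanically.
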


Note that the analogous statement to Theorem \ref{Thm:intro} also follows from Theorem \ref{Thm:harmonic+}.

\bibliographystyle{plain}
\bibliography{ns}

\end{document}